\documentclass[12pt]{article}
\PassOptionsToPackage{dvipsnames,svgnames,x11names}{xcolor}
\usepackage{xcolor}
\definecolor{labelkey}{rgb}{0,0.08,0.45}
\definecolor{refkey}{rgb}{0,0.6,0.0}
\definecolor{Brown}{rgb}{0.45,0.0,0.05}
\definecolor{lime}{rgb}{0.00,0.8,0.0}
\definecolor{lblue}{rgb}{0.5,0.5,0.99}
\definecolor{OliveGreen}{rgb}{0,0.6,0}
\definecolor{tyrianpurple}{rgb}{0.4, 0.01, 0.24}
\definecolor{myseagreen}{HTML}{3FBC9D}
\definecolor{myblue}{rgb}{0.9,0.9,0.98}
\colorlet{hlcyan}{cyan!30}

\usepackage{mathpazo} 
\usepackage{subcaption} 
\usepackage[T1]{fontenc}
\usepackage{mathtools}
\usepackage{amssymb}
\usepackage{stmaryrd}
\usepackage{empheq}

\usepackage{hyperref}
\hypersetup{
  colorlinks=true,
  linkcolor=refkey,
  urlcolor=lblue,
  citecolor=magenta
}

\usepackage{amsthm}
\makeatletter
\def\th@plain{%
  \thm@notefont{}%
  \itshape 
}
\def\th@definition{%
  \thm@notefont{}%
  \normalfont 
}

\usepackage[capitalize,nameinlink]{cleveref}
\makeatother
\newtheorem{theorem}{Theorem}[section]
\newtheorem{lemma}[theorem]{Lemma}
\newtheorem{corollary}[theorem]{Corollary}
\newtheorem{proposition}[theorem]{Proposition}
\newtheorem{definition}[theorem]{Definition}
\newtheorem{example}[theorem]{Example}
\newtheorem{fact}[theorem]{Fact}
\newtheorem{remark}[theorem]{Remark}
\crefname{theorem}{Theorem}{Theorems}
\Crefname{theorem}{Theorem}{Theorems}
\crefname{fact}{Fact}{facts}
\Crefname{fact}{Fact}{facts}
\crefname{equation}{}{equations}
\crefname{chapter}{Appendix}{chapters}
\crefname{item}{}{items}
\crefname{enumi}{}{}

\def\endproof{\ensuremath{\hfill \quad \blacksquare}}

\usepackage{nicematrix}
\usepackage{booktabs}
\usepackage{siunitx}
\usepackage{tikz}
\usepackage[color=myseagreen]{todonotes}
\usepackage{soul}
\usepackage{nameref}
\usepackage{graphicx}
\usepackage{float}
\usepackage[shortlabels,inline]{enumitem}
\setlist[enumerate]{nosep}
\usepackage{relsize}

\usepackage[margin=1in,footskip=0.4in]{geometry}
\makeatletter

\let\orig@label\label

\renewcommand{\label}[1]{%
  \begingroup
  \def\@currentlabelname{}%
  \ifx\current@theorem\relax\else
    \def\@currentlabelname{\current@theorem}%
  \fi
  \ifx\cref@currentlabel\undefined\else
    \let\@currentlabelname\cref@currentlabel
  \fi
  \orig@label{#1}%
  \endgroup
}

\AtBeginEnvironment{theorem}{\def\current@theorem{theorem}}
\AtBeginEnvironment{proposition}{\def\current@theorem{proposition}}
\AtBeginEnvironment{fact}{\def\current@theorem{fact}}
\makeatother

\newcommand{\seppthree}{\setlength{\itemsep}{-3pt}}

\newcommand{\nnn}{\ensuremath{{n\in\mathbb{N}}}}

\newcommand{\menge}[2]{\big\{{#1}~\big|~{#2}\big\}}

\newcommand{\scal}[2]{\left\langle{#1},{#2}\right\rangle}

\newcommand{\RR}{\ensuremath{\mathbb{R}}}

\newcommand{\NN}{\ensuremath{\mathbb{N}}}

\newcommand{\conv}{\ensuremath{\operatorname{conv}\,}}

\newcommand{\Fix}{\ensuremath{\operatorname{Fix}}}
\newcommand{\Id}{\ensuremath{\operatorname{Id}}}

\providecommand{\norm}[1]{\lVert#1\rVert}

\providecommand{\innp}[1]{\langle#1\rangle}

\newcommand{\mybluebox}[1]{\colorbox{myblue}{\hspace{1em}#1\hspace{1em}}}

\author{
Heinz H.\ Bauschke\thanks{
Mathematics, University
of British Columbia,
Kelowna, B.C.\ V1V~1V7, Canada. E-mail:
\texttt{heinz.bauschke@ubc.ca}.}~~~and~
Tran Thanh Tung\thanks{
Mathematics, University
of British Columbia,
Kelowna, B.C.\ V1V~1V7, Canada. E-mail:
 E-mail: \texttt{tung.tran@ubc.ca}.}
}

\title{\textsf{
    Extending Meshulam's result on the boundedness of 
    orbits of relaxed projections onto affine subspaces\\  
    from finite to infinite-dimensional Hilbert spaces}
}

\date{June 2, 2026}

\begin{document}
\allowdisplaybreaks
\maketitle
\begin{abstract}
    \noindent
    In 1996, Meshulam proved that any sequence generated in Euclidean space by randomly projecting onto affine subspaces drawn from a finite collection stays bounded even
    if the intersection of the subspaces is empty. His proof,
    which works even for relaxed projections, relies on an ingenious induction on the dimension of the Euclidean space.

    In this paper, we extend Meshulam's result to the general Hilbert space setting by an induction proof on the number
    of affine subspaces in the given collection. We require
    that the corresponding parallel linear subspaces
    are innately regular --- this assumption always holds in Euclidean space. We also discuss the sharpness of our result and make a connection to randomized block Kaczmarz methods.
\end{abstract}

\small
\noindent
{\bfseries 2020 Mathematics Subject Classification:}
{
    Primary 47H09, 65K05, 90C25;
    Secondary 52A37, 52B55.
}

\noindent {\bfseries Keywords:}
affine subspace,
Hilbert space,
innate regularity,
linear subspace,
Meshulam's theorem,
random relaxed projections,
randomized block Kaczmarz method

\section{Introduction}
Throughout this paper,
\begin{empheq}[box=\mybluebox]{equation}
    \label{e:mcA}
    \text{$X$ is a real Hilbert space, with inner product $\scal{\cdot}{\cdot}$ and induced norm $\norm{\cdot}$,}
\end{empheq}
and
\begin{empheq}[box=\mybluebox]{equation}
    \label{e:mcL}
    \text{$\mathcal{A}$ is a nonempty finite collection of closed affine\footnotemark subspaces of $X$}
\end{empheq}
with
\begin{empheq}[box=\mybluebox]{equation}
    \text{$\mathcal{L}$ is the collection of closed linear subspaces associated with $\mathcal{A}$.}
\end{empheq}
\footnotetext{Recall that a subset $A$ of $X$ is affine if $A-A$ is a linear subspace.}
Given a nonempty finite collection
$\mathcal{C}$ of closed convex subsets of $X$ and an interval $\Lambda \subseteq [0,2]$, consider the associated set of relaxed projectors\footnotemark
\begin{empheq}[box=\mybluebox]{equation}
    \label{RCLambda}
    \mathcal{R}_{\mathcal{C},\Lambda} := \menge{(1-\lambda)\Id+\lambda P_C}{C\in\mathcal{C},\ \lambda\in \Lambda},
\end{empheq}
where $P_C$ is the orthogonal projector onto $C$ and $\Id$ is the identity mapping on $X$. {For notational convenience, we will write $\mathcal{R}_{\mathcal{C},\lambda}$ when $\Lambda =[\lambda,\lambda] = \{\lambda\}$.}
\footnotetext{Given a nonempty closed convex subset $C$ of $X$, we denote by $P_C$ the operator which maps $x\in X$ to its unique nearest point in $C$. The relaxed projector is given by $(1-\lambda)\Id+\lambda P_C$. When $\lambda=1$, this reduces to the usual projection mapping; as $\lambda\to 2^-$, it approaches a reflection; and as $\lambda\to 0^+$, it approaches the identity mapping.}

Building on the work of Aharoni, Duchet, and Wajnryb \cite{ADW},
Meshulam proved\footnote{More precisely,
    Meshulam provided a complete proof for $\Lambda = \{1\}$
    and he pointed out that Dr.\ Ron Aharoni noted that the proof extends
    to the more general setting.} in \cite[Theorem~2]{Meshulam} the following result:

\begin{fact}[Meshulam]
    \label{f:Meshulam}
    Suppose that $X$ is finite-dimensional.
    Let $\lambda\in[0,2[$ and $x_0\in X$.
    Generate the sequence $(x_n)_{\nnn}$ in $X$ as follows:
    Given a current term $x_n$, pick $R_n \in \mathcal{R}_{\mathcal{A},[0,\lambda]}$, and update via
    \begin{equation}
        x_{n+1} := R_n x_n.
    \end{equation}
    Then the sequence $(x_n)_{\nnn}$ is bounded.
\end{fact}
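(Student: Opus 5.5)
We will prove \cref{f:Meshulam} by induction on the number $m$ of affine subspaces in $\mathcal{A}$. The abstract suggests this is the route the paper takes, in place of Meshulam's induction on $\dim X$. In finite dimensions all linear subspaces are closed and all finite sums of them are closed; this is the regularity property that makes the argument work.

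\textbf{Base case and general setup.} For $m=1$, with $\mathcal{A}=\{A\}$ and $a\in A$, every $R\in\mathcal{R}_{\mathcal{A},[0,\lambda]}$ is nonexpansive and fixes $a$. Hence $\norm{x_n-a}\le\norm{x_0-a}$, and the sequence is bounded. More generally, if $\bigcap\mathcal{A}\neq\varnothing$, the same argument with a common point $a$ gives boundedness, since every relaxed projector with $\lambda\in[0,2]$ is quasinonexpansive with respect to its fixed point set. So the real content is the inconsistent case.

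\textbf{Inductive step via a decomposition.} Write $A_i=a_i+L_i$ and set $M=\bigcap_i L_i$. Each $R\in\mathcal{R}_{\mathcal{A},[0,\lambda]}$ commutes with $P_M$ and with translations along $M$. The component of $x_n$ in $M$ is never changed: $P_M x_{n+1}=P_Mx_n$ because $P_M P_{A_i}=P_M(\cdot)+\text{const}$. Actually that constant is $P_M a_i$, which may be nonzero, so the $M$-component could drift. To avoid this, first translate so that each $a_i\in M^\perp$; then $P_M$ is invariant and we may restrict to $M^\perp$ and assume $M=\{0\}$.

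Now take a large ball. Once $\norm{x}$ is huge, the aim is to show that a relaxed projection onto $A_i$ acts, up to a bounded error, like a relaxed projection onto $L_i$. The linear operators $(1-\mu)\Id+\mu P_{L_i}$ with $\mu\in[0,\lambda]$ are contractions with no common nonzero fixed point. Far from the origin, we would argue the following. Any long stretch of the orbit that uses all $m$ sets strictly decreases the norm by a definite fraction, using a uniform ``angle'' estimate from regularity: $\mathrm{dist}(x,\bigcap L_i)\le \kappa\max_i \mathrm{dist}(x,L_i)$. Any stretch that uses only a proper subfamily $\mathcal{A}'$ stays within a bounded distance, given by the induction hypothesis, of where it started. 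This needs a uniform form of the inductive hypothesis: the displacement bound depends only on the distance of the starting point to some point of the subfamily configuration, so we strengthen the induction statement to \emph{$\norm{x_n - y}\le \norm{x_0-y}+C$ for a suitable reference $y$}. This lets us combine phases.

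\textbf{Main obstacle.} The hardest step is to make the phase argument quantitative. Along an arbitrary (adversarial) sequence of sets and relaxations, we must show that each full cycle reduces $\norm{x}$ by at least $c\norm{x}-D$, while the intervening sub-family phases increase it by at most a constant $C$. The key inequality is the Pythagorean-type estimate $\norm{Rx-y}^2\le\norm{x-y}^2-\tfrac{2-\lambda}{\lambda}\norm{Rx-x}^2$. This is where $\lambda<2$ is essential. Combined with the regularity constant $\kappa$, it converts ``each $L_i$ was visited'' into a definite norm decrease. I would first try to prove the linear version, the uniform contraction of products of linear relaxed projections over cycles covering all indices, and then transfer it to the affine case by a perturbation argument in which the affine offsets contribute bounded additive errors.
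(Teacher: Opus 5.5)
Your fixed-parameter skeleton matches the paper's route (\cref{t:meshulaminf}), and the fact then follows from \cref{t:meshulaminfvar} because innate regularity is automatic in finite dimensions. That skeleton is: induction on the number of sets, the strengthened hypothesis $\norm{x_n}\le\norm{x_0}+C$, contraction of ``cycles'' on $\mathbf{L}^\perp$, and a geometric series. There are, however, two genuine gaps.

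\textbf{First gap: varying parameters.} The statement lets every $R_n$ come from $\mathcal{R}_{\mathcal{A},[0,\lambda]}$. In that generality your central claim, that any stretch using all $m$ sets decreases $\norm{x}$ by a definite fraction, is false.
\begin{itemize}
\item A parameter $\mu=0$ turns a ``visit'' into the identity. So your assertion that the operators $(1-\mu)\Id+\mu P_{L_i}$, $\mu\in[0,\lambda]$, have no common nonzero fixed point also fails.
\item If every parameter equals a small $\varepsilon$, then $\norm{R_{L,\varepsilon}x}\ge(1-\varepsilon)\norm{x}$. Hence the linear part $T$ of such a stretch satisfies $\norm{Tx}\ge(1-\varepsilon)^m\norm{x}$, so there is no uniform contraction factor.
\item Your Pythagorean estimate is indeed uniform in $\mu\le\lambda$. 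But the decrease it gives at a visit to $A_i$ is $\mu(2-\mu)d_{A_i}(x)^2$, which vanishes as $\mu\to0$.
\end{itemize}
The missing idea is the reduction to a single parameter. First prove $\norm{x_n}\le\norm{x_0}+\lambda C$ for the fixed parameter $\lambda$, uniformly over all sequences of sets. Then write $R_{A,\mu}=(1-\mu/\lambda)\Id+(\mu/\lambda)R_{A,\lambda}$ and expand the product. This shows $x_n$ is a convex combination of points obtained by applying fixed-$\lambda$ relaxed projections along subsequences of $(A_n)$. All of these lie in the convex ball of radius $\norm{x_0}+\lambda C$.

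\textbf{Second gap: uniform contraction of long cycles.} Even for fixed $\lambda$, the uniform contraction of linear cycles of \emph{arbitrary length} is the heart of the proof, and you only announce it. The tools you name do not deliver it. Regularity of the whole family controls $\max_i d_{L_i}(x)$ at a single point, whereas along a long cycle each $L_i$ is visited at a different iterate. The Pythagorean identity controls only $\sum_k\norm{x_{k+1}-x_k}^2$, not the total displacement. Any bound obtained this way therefore degrades with the cycle length.

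The paper, following Gordon--Tam (\cref{p:sine3}), combines two ingredients:
\begin{itemize}
\item the monotonicity $\sin_{L'}(R_{L,\lambda}x)\le\sin_{L'}(x)$ for $L'\subseteq L$;
\item regularity of \emph{every pair} $\{\bigcap_{L\in\mathcal{L}_1}L,\bigcap_{L\in\mathcal{L}_2}L\}$, which is innate regularity.
\end{itemize}
With these it shows, by induction on the number of distinct subspaces visited, that $\sin_{\mathbf{L}_q}(x^{(q)})\le\kappa_*^{N_q-1}\max_{i\le q}\sin_{L_i}(x^{(i)})$, with no dependence on $q$. Applied to $x\in\mathbf{L}^\perp$, this produces one step with $\sin_{L_i}(x^{(i)})\ge\kappa_*^{-(\ell-1)}$. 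Hence $\norm{QP_{\mathbf{L}^\perp}}\le(1-\lambda(2-\lambda)\kappa_*^{-2(\ell-1)})^{1/2}$ for every cycle $Q$.

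Relatedly, your additive error per cycle is uniform only if each cycle is \emph{minimal}, ending the first time all sets have been used. Then the cycle minus its last step uses a proper subfamily, and its affine offset is bounded by the induction hypothesis started at $0$. For an arbitrary stretch using all sets, no such bound is available without redoing the whole argument.

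\textbf{A smaller point.} $R_{A_i,\mu}$ does not commute with $P_M$ unless $0\in A_i$. What holds is $P_MR_{A_i,\mu}=P_M$, since $P_MP_{A_i}x=P_Ma_i+P_M(x-a_i)=P_Mx$. So the $M$-drift you worried about never occurs.
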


This result is easy to prove if
$\bigcap_{A\in \mathcal{A}} A \neq \varnothing$,
because each $P_A$ is (firmly) nonexpansive\footnote{Recall that an operator $T:X\to X$ is called firmly nonexpansive if
    $(\forall x\in X)(\forall y\in X)$ $\|Tx-Ty\|^2\leq \|x-y\|^2-\|(\Id-T)x-(\Id-T)y\|^2$; equivalently, $\|Tx-Ty\|^2\leq\langle x-y,Tx-Ty\rangle$.}, so
the sequence $(x_n)_\nnn$ is Fej\'{e}r monotone\footnote{Given a nonempty subset $C$ of $X$, recall that a sequence $(x_n)_{n\in\mathbb{N}}$ is called Fej\'er monotone with respect to $C$ if $(\forall c\in C)(\forall n\in\NN)$ $\|x_{n+1}-c\|\leq \|x_n-c\|$.} with
respect to this intersection (and hence, bounded). In fact, convergence in this case was also established in \cite[Theorem~3.3]{HBrandom}.

Meshulam's proof of \cref{f:Meshulam} in the case when
$\bigcap_{A\in \mathcal{A}} A = \varnothing$ is
much more involved and relies on a clever
\emph{induction on the dimension} of the space~$X$.
His proof thus does not generalize to the case when
$X$ is \emph{infinite-dimensional}, which motivates the
following:

\emph{The goal of this paper is to generalize Meshulam's result to the case when $X$ is \emph{infinite-dimensional}.}

More precisely,
in \cref{t:meshulaminfvar},
we extend \cref{f:Meshulam} to the case where $X$ is potentially infinite-dimensional under the additional assumption of
``innate regularity'' of the collection $\mathcal{L}$.
This assumption is automatically true when $X$ is finite-dimensional; moreover, it is known that \emph{some}
additional assumption is required in general
(see {\cref{e:noregular}}).
Similar to Meshulam's proof, we argue by mathematical induction. In stark contrast, Meshulam's induction is on
the \emph{dimension} of $X$ while our proof features
an induction is on  \emph{the number of
    closed affine subspaces} in $\mathcal{A}$.

The rest of the paper is organized as follows. After discussing some auxiliary results in
\cref{s:aux}, we present the key ingredients for the proof of our main result in \cref{s:relaxedproj}. The extension of \cref{f:Meshulam} to infinite-dimensional spaces
is presented in
\cref{s:fixedlambda} (see \cref{t:meshulaminfvar}). In the final \cref{s:ex}, we comment on a nice connection
to randomized block Kaczmarz methods, on an example in $L_2[0,1]$,  and on limiting examples.
Moreover, we present a linear convergence result for a fixed composition of relaxed projectors as well as an
illustration comparing this to \cref{t:meshulaminf}.

The notation we employ is standard and follows, e.g., \cite{Bauschke2017}
and \cite{Rocky}.

\section{Auxiliary results}
\label{s:aux}
From this section onward, for a closed convex subset $C$ of $X$ and a constant $\lambda$, we will use the following notation:
\begin{empheq}[box=\mybluebox]{equation}
    R_{C,\lambda} := (1-\lambda)\Id + \lambda P_C.
\end{empheq}

\begin{fact}
    {\rm \cite[Corollary~3.24]{Bauschke2017}}
    \label{f:decomposition} Let $L$ be a
    closed linear subspace of $X$. Then,
    \begin{equation}
        \Id=P_L+P_{L^\perp},
    \end{equation}
    and
    \begin{equation}
        (\forall x\in X)\quad \norm{x}^2=\norm{P_{L}x}^2+\norm{P_{L^\perp}x}^2=\norm{P_{L}x}^2+\norm{x-P_Lx}^2.
    \end{equation}
    In particular, we have $\norm{P_Lx}\leq\norm{x}$ for all $x\in X$.
\end{fact}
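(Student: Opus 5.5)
The statement is \cref{f:decomposition}, the standard orthogonal decomposition for a closed linear subspace $L$. I would prove it from the projection theorem: since $L$ is nonempty, closed and convex, each $x\in X$ has a unique nearest point $P_Lx\in L$. Because $L$ is a linear subspace, this nearest point is characterized by
\begin{equation}
  x-P_Lx\in L^\perp .
\end{equation}
To see this, write $p:=P_Lx$ and take any $y\in L$ and $t\in\RR$. Then $p+ty\in L$, so $\norm{x-p-ty}^2\geq\norm{x-p}^2$. Expanding gives $-2t\scal{x-p}{y}+t^2\norm{y}^2\geq 0$ for all $t\in\RR$. Dividing by $t>0$ and by $t<0$ and letting $t\to 0$ forces $\scal{x-p}{y}=0$.

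Next I would show that $x-P_Lx=P_{L^\perp}x$. The space $L^\perp$ is itself a closed linear subspace, so $P_{L^\perp}x$ is characterized by lying in $L^\perp$ with $x-P_{L^\perp}x\in (L^\perp)^\perp$. Set $q:=x-P_Lx$. We already have $q\in L^\perp$, and $x-q=P_Lx\in L\subseteq (L^\perp)^\perp$. Hence $q$ satisfies the characterization, and by uniqueness $q=P_{L^\perp}x$. This gives $\Id=P_L+P_{L^\perp}$. Note that this step needs only the inclusion $L\subseteq L^{\perp\perp}$, which is trivial, and not the equality $L=L^{\perp\perp}$.

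The norm identity is then Pythagoras. Writing $x=P_Lx+P_{L^\perp}x$ with $\scal{P_Lx}{P_{L^\perp}x}=0$, we get
\begin{equation}
  \norm{x}^2=\norm{P_Lx}^2+\norm{P_{L^\perp}x}^2=\norm{P_Lx}^2+\norm{x-P_Lx}^2 .
\end{equation}
Dropping the nonnegative second term gives $\norm{P_Lx}\leq\norm{x}$.

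The only point needing care is the variational characterization of $P_L$ in the first step. Existence of $P_Lx$ relies on completeness, through the projection theorem, which I would cite from \cite{Bauschke2017}. The characterization must be applied to both $L$ and $L^\perp$, so I would state it once for an arbitrary closed linear subspace.
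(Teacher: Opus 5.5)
Your proof is correct and follows the standard textbook argument behind the corollary the paper cites; the paper gives no proof of its own. That argument is the orthogonality characterization of $P_L$, then the identification $x-P_Lx=P_{L^\perp}x$, then Pythagoras. The one step you only assert is ``by uniqueness'', but it is immediate: $q-P_{L^\perp}x$ lies in $L^\perp$, and by your first step applied to $L^\perp$ it also lies in $(L^\perp)^\perp$, so it is $0$.
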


\begin{fact}\label{f:reflection1}
    Let $L$ be a closed linear subspace of $X$, and let $\lambda\in\left]0,2\right[$. Then,
    \begin{equation}
        (\forall x\in X)\quad\frac{\norm{x}^2-\norm{R_{L,\lambda}x}^2}{\lambda(2-\lambda)}=\norm{x-P_Lx}^2.
    \end{equation}
    Consequently, $\norm{R_{L,\lambda}x}\leq\norm{x}$ for all $x\in X$.
\end{fact}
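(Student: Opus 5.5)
The identity follows by expanding $R_{L,\lambda}x$ through the orthogonal decomposition of \cref{f:decomposition} and applying Pythagoras. Fix $x\in X$ and write $p := P_Lx\in L$ and $q := x - P_Lx = P_{L^\perp}x\in L^\perp$, so that $x = p+q$ and $p\perp q$.

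Then
\begin{equation*}
R_{L,\lambda}x = (1-\lambda)x + \lambda P_Lx = (1-\lambda)(p+q) + \lambda p = p + (1-\lambda)q .
\end{equation*}
Since $p\in L$ and $(1-\lambda)q\in L^\perp$, the Pythagorean identity in \cref{f:decomposition} gives
\begin{equation*}
\norm{R_{L,\lambda}x}^2 = \norm{p}^2 + (1-\lambda)^2\norm{q}^2, \qquad \norm{x}^2 = \norm{p}^2 + \norm{q}^2 .
\end{equation*}
Subtracting the first from the second yields
\begin{equation*}
\norm{x}^2 - \norm{R_{L,\lambda}x}^2 = \bigl(1-(1-\lambda)^2\bigr)\norm{q}^2 = \lambda(2-\lambda)\norm{x-P_Lx}^2 .
\end{equation*}
Because $\lambda\in\left]0,2\right[$, we have $\lambda(2-\lambda)>0$, so dividing by it gives the claimed identity.

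For the consequence, the right-hand side $\norm{x-P_Lx}^2$ is nonnegative and $\lambda(2-\lambda)>0$. Hence $\norm{x}^2-\norm{R_{L,\lambda}x}^2\geq 0$, that is, $\norm{R_{L,\lambda}x}\leq\norm{x}$.

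The only point needing care is the linearity of $L$: it guarantees $P_Lx\in L$ and $x-P_Lx\in L^\perp$, which is exactly what the orthogonal splitting requires. There is no real obstacle beyond this.
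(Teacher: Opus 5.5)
Your proof is correct and is essentially the paper's argument: both are a direct computation resting on the orthogonality $P_Lx\perp x-P_Lx$. The paper expands $\norm{x-\lambda(x-P_Lx)}^2$ and uses $\innp{x,x-P_Lx}=\norm{x-P_Lx}^2$, whereas you write $R_{L,\lambda}x=P_Lx+(1-\lambda)(x-P_Lx)$ and apply Pythagoras directly, which is a slightly cleaner packaging of the same computation.
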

\begin{proof}
    By definition, we have
    \begin{subequations}
        \begin{align}
            \norm{R_{L,\lambda}x}^2 & =\norm{x-\lambda(x-P_Lx)}^2                                                                  \\
                                    & =\norm{x}^2-2\lambda\innp{x,x-P_Lx}+\lambda^2\norm{x-P_Lx}^2                                 \\
                                    & =\norm{x}^2-2\lambda\left(\norm{x-P_Lx}^2+\innp{P_Lx,x-P_Lx}\right)+\lambda^2\norm{x-P_Lx}^2 \\
                                    & =\norm{x}^2-2\lambda\norm{x-P_Lx}^2+\lambda^2\norm{x-P_Lx}^2                                 \\
                                    & =\norm{x}^2-\lambda(2-\lambda)\norm{x-P_Lx}^2,
        \end{align}
    \end{subequations}
    which is the desired result. The ``Consequently'' part then follows from $\lambda\in\left]0,2\right[$.
\end{proof}

Let $x,y\in X$. We adopt the convention that the angle between $x$ and $y$ is $\frac{\pi}{2}$ if exactly one of them is the zero vector, and $0$ if $x=y=0$.

\begin{proposition}\label{p:sine1}
    Let $L$ be a closed linear subspace of $X$, and let $\lambda\in\left]0,2\right[$. Then for all $x\in X$, the sine {$\sin_L(x)$ and cosine $\cos_L(x)$} of the angle between $x$ and its projection $P_Lx$ are given by
    \begin{equation}
        \sin_L(0)=0,\quad\cos_L(0)={1,}
    \end{equation}
    and
    \begin{equation}
        (\forall x\in X\smallsetminus\{0\})\quad\sin_L(x)=\frac{\norm{x-P_Lx}}{\norm{x}},\quad \cos_L(x)=\frac{\norm{P_Lx}}{\norm{x}}.
    \end{equation}
    Moreover, we have
    \begin{equation}
        (\forall x\in X\smallsetminus\{0\})(\forall \varepsilon\in\left[0,1\right])\quad\sin_L(x)\geq\varepsilon\Leftrightarrow\norm{R_{L,\lambda}x}\leq\sqrt{1-\lambda(2-\lambda)\varepsilon^2}\norm{x}.
    \end{equation}
\end{proposition}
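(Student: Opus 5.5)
The plan is to derive the formulas for $\sin_L$ and $\cos_L$ from the orthogonal decomposition in \cref{f:decomposition}. Then the equivalence follows by rearranging the identity in \cref{f:reflection1}.

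\emph{Sine and cosine.} For $x=0$ we have $P_L0=0$, so by the convention the angle is $0$, giving $\sin_L(0)=0$ and $\cos_L(0)=1$. For $x\neq0$, \cref{f:decomposition} gives $x = P_Lx + (x-P_Lx)$ with $P_Lx\perp x-P_Lx$. Hence $\innp{x,P_Lx}=\norm{P_Lx}^2$.

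If $P_Lx\neq 0$, the cosine of the angle is
\begin{equation}
\frac{\innp{x,P_Lx}}{\norm{x}\norm{P_Lx}}=\frac{\norm{P_Lx}}{\norm{x}}.
\end{equation}
The sine is then $\sqrt{1-\cos^2}=\sqrt{\norm{x}^2-\norm{P_Lx}^2}/\norm{x}=\norm{x-P_Lx}/\norm{x}$, again by \cref{f:decomposition}. Both values are nonnegative because the angle lies in $[0,\pi]$ and the cosine is nonnegative.

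If $P_Lx=0$, exactly one of the two vectors is zero, so by convention the angle is $\pi/2$. This gives sine $1=\norm{x}/\norm{x}=\norm{x-P_Lx}/\norm{x}$ and cosine $0=\norm{P_Lx}/\norm{x}$, so the formulas hold in this case too. This case check is the only mildly delicate point; the rest is algebra.

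\emph{Equivalence.} Fix $x\neq0$ and $\varepsilon\in[0,1]$. By \cref{f:reflection1},
\begin{equation}
\norm{R_{L,\lambda}x}^2=\norm{x}^2-\lambda(2-\lambda)\norm{x-P_Lx}^2=\norm{x}^2\bigl(1-\lambda(2-\lambda)\sin_L^2(x)\bigr).
\end{equation}
Since $\lambda(2-\lambda)>0$ and $\sin_L(x),\varepsilon\ge0$, we have the chain
\begin{equation}
\sin_L(x)\ge\varepsilon \iff \sin_L^2(x)\ge\varepsilon^2 \iff \norm{R_{L,\lambda}x}^2\le\bigl(1-\lambda(2-\lambda)\varepsilon^2\bigr)\norm{x}^2.
\end{equation}

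To take square roots, I need $1-\lambda(2-\lambda)\varepsilon^2\ge0$. This holds because $\lambda(2-\lambda)\le1$ on $]0,2[$ and $\varepsilon^2\le 1$. Both sides of the last inequality are therefore nonnegative, and taking square roots yields the stated equivalence.
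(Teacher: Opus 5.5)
Your proof is correct and follows essentially the same route as the paper: the same case split ($x=0$, $P_Lx=0$, $P_Lx\neq0$) using the angle convention and \cref{f:decomposition}, then the same chain of equivalences obtained from \cref{f:reflection1}. Your explicit check that $1-\lambda(2-\lambda)\varepsilon^2\geq 0$ (because $\lambda(2-\lambda)\leq 1$) fills in a detail the paper leaves implicit when it takes square roots.
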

\begin{proof}
    The case $x=0$ is clear. When $x\neq0$ and $P_Lx=0$, then, by the angle convention, we get that
    \begin{equation}
        \sin_L(x)=1=\frac{\norm{x-P_Lx}}{\norm{x}}\quad\text{and}\quad\cos_L(x)=0=\frac{\norm{P_Lx}}{\norm{x}}.
    \end{equation}

    When $x\neq0$ and $P_Lx\neq0$, we have that the cosine of the angle between $x$ and $P_Lx$ is given by
    \begin{align}
        \cos_L(x) & =\frac{\innp{x,P_Lx}}{\norm{x}\norm{P_Lx}}
        =\frac{\norm{P_Lx}^2}{\norm{x}\norm{P_Lx}}
        =\frac{\norm{P_Lx}}{\norm{x}}.\label{20251111a}
    \end{align}
    Since the sine of the angle between any two vectors is always nonnegative,
    we obtain
    \begin{equation}
        \sin_L(x)=\sqrt{1-\cos_L^2(x)}\overset{\cref{20251111a}}{=}\sqrt{1-\frac{\norm{P_Lx}^2}{\norm{x}^2}}=\frac{\sqrt{\norm{x}^2-\norm{P_Lx}^2}}{\norm{x}}\overset{\text{\cref{f:decomposition}}}{=}\frac{\norm{x-P_Lx}}{\norm{x}}.\label{20251111b}
    \end{equation}
    This implies
    \begin{subequations}
        \begin{align}
            (\forall\varepsilon\in\left[0,1\right])\quad\sin_L(x)\geq\varepsilon & \Leftrightarrow\sin_L^2(x)\geq\varepsilon^2\notag                                                                                    \\
                                                                                 & \Leftrightarrow\frac{\norm{x-P_Lx}^2}{\norm{x}^2}\geq\varepsilon^2\tag{by \cref{20251111b}}                                          \\
                                                                                 & \Leftrightarrow\frac{\norm{x}^2-\norm{R_{L,\lambda}x}^2}{\lambda(2-\lambda)\norm{x}^2}\geq\varepsilon^2\tag{by \cref{f:reflection1}} \\
                                                                                 & \Leftrightarrow\norm{R_{L,\lambda}x}\leq\sqrt{1-\lambda(2-\lambda)\varepsilon^2}\norm{x}\notag,
        \end{align}
    \end{subequations}
    which is the desired result.
\end{proof}

\begin{fact}
    \label{f:commute}
    Let $L_1$ and $L_2$ be two closed linear subspaces of $X$ such that $L_1\subseteq L_2$, and let $\lambda\in\left]0,2\right[$. Then
    \begin{equation}
        P_{L_1}=P_{L_1}P_{L_2}=P_{L_2}P_{L_1},\quad P_{L_1^\perp}P_{L_2}=P_{L_2}P_{L_1^\perp},\quad P_{L_1}P_{L_2^\perp}=P_{L_2^\perp}P_{L_1}.\label{20251121d}
    \end{equation}
    Consequently,
    \begin{equation}
        P_{L_1}=P_{L_1}R_{L_2,\lambda}=R_{L_2,\lambda}P_{L_1},\quad P_{L_1^\perp}R_{L_2,\lambda}=R_{L_2,\lambda}P_{L_1^\perp},\quad P_{L_1}R_{L_2^\perp,\lambda}=R_{L_2^\perp,\lambda}P_{L_1}.
    \end{equation}
\end{fact}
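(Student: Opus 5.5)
The plan is to prove the projector identities in \cref{20251121d} first, using only the orthogonal decomposition of \cref{f:decomposition} and the characterization of $P_L x$ as the unique point $p\in L$ with $x-p\in L^\perp$. The relaxed-projector identities will then follow by linearity, since $R_{L_2,\lambda}=(1-\lambda)\Id+\lambda P_{L_2}$ and $R_{L_2^\perp,\lambda}=(1-\lambda)\Id+\lambda P_{L_2^\perp}$.

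\textbf{Step 1: $P_{L_1}=P_{L_1}P_{L_2}=P_{L_2}P_{L_1}$.}
\begin{itemize}
\item For the second equality, take $x\in X$. Since $P_{L_1}x\in L_1\subseteq L_2$, the projector $P_{L_2}$ fixes it, so $P_{L_2}P_{L_1}x=P_{L_1}x$.
\item For the first equality, write $x=P_{L_2}x+(x-P_{L_2}x)$ with $x-P_{L_2}x\in L_2^\perp\subseteq L_1^\perp$. Applying the linear map $P_{L_1}$ kills the second summand, which gives $P_{L_1}x=P_{L_1}P_{L_2}x$.
\end{itemize}

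\textbf{Step 2: the remaining two commutation relations.}
\begin{itemize}
\item Use $P_{L_1^\perp}=\Id-P_{L_1}$ from \cref{f:decomposition}. Then $P_{L_1^\perp}P_{L_2}=P_{L_2}-P_{L_1}P_{L_2}=P_{L_2}-P_{L_1}$ by Step 1.
\item Likewise $P_{L_2}P_{L_1^\perp}=P_{L_2}-P_{L_2}P_{L_1}=P_{L_2}-P_{L_1}$, so the two products agree.
\item Similarly, with $P_{L_2^\perp}=\Id-P_{L_2}$, both $P_{L_1}P_{L_2^\perp}$ and $P_{L_2^\perp}P_{L_1}$ equal $P_{L_1}-P_{L_1}=0$. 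Hence they commute.
\end{itemize}

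\textbf{Step 3: the ``consequently'' part.}
\begin{itemize}
\item Expand $P_{L_1}R_{L_2,\lambda}=(1-\lambda)P_{L_1}+\lambda P_{L_1}P_{L_2}=(1-\lambda)P_{L_1}+\lambda P_{L_1}=P_{L_1}$. The product $R_{L_2,\lambda}P_{L_1}$ is handled the same way using the other half of Step 1.
\item $P_{L_1^\perp}$ commutes with $\Id$ and, by Step 2, with $P_{L_2}$. Therefore it commutes with the linear combination $R_{L_2,\lambda}$.
\item In the same way, $P_{L_1}$ commutes with $\Id$ and with $P_{L_2^\perp}$, hence with $R_{L_2^\perp,\lambda}$.
\end{itemize}

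The main care point is the first equality of Step 1. It needs the inclusion $L_2^\perp\subseteq L_1^\perp$ together with the linearity of $P_{L_1}$; everything after that is algebraic. The hypothesis $\lambda\in\left]0,2\right[$ plays no role beyond the definition of $R_{\cdot,\lambda}$.
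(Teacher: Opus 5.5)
Your proof is correct and has the same structure as the paper's. The paper simply cites Deutsch's Lemma~9.2 for the projector identities and then says the relaxed-projector identities follow, which is exactly the expansion of $R_{L_2,\lambda}=(1-\lambda)\Id+\lambda P_{L_2}$ (and of $R_{L_2^\perp,\lambda}$) that you carry out in Step~3. Your Steps~1--2 just supply a self-contained elementary proof of the cited lemma, using $L_2^\perp\subseteq L_1^\perp$ and $P_{L^\perp}=\Id-P_L$, and that is a perfectly valid replacement for the reference.
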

\begin{proof}
    \cite[Lemma~9.2]{Deutsch} yields \cref{20251121d}.
    The ``Consequently'' part then follows.
\end{proof}

\begin{proposition}
    {\rm \cite[Lemma~3.1]{GT}}
    \label{p:sine2}
    Let $L_1$ and $L_2$ be two closed linear subspaces of $X$ such that $L_1\subseteq L_2$, and let $\lambda\in\left]0,2\right[$. Then,
    \begin{equation}
        (\forall x\in X)\quad\sin_{L_1}(R_{L_2,\lambda}x)\leq\sin_{L_1}(x).
    \end{equation}
\end{proposition}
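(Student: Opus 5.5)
Set $y := R_{L_2,\lambda}x$; the goal is to compare $\norm{y-P_{L_1}y}/\norm{y}$ with $\norm{x-P_{L_1}x}/\norm{x}$. The plan is to compute the numerator and the denominator separately with \cref{f:commute} and \cref{f:decomposition}, and then compare the two ratios.

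\emph{Step 1 (numerator).} By \cref{f:commute}, $P_{L_1^\perp}R_{L_2,\lambda} = R_{L_2,\lambda}P_{L_1^\perp}$, so
\begin{equation*}
y - P_{L_1}y = P_{L_1^\perp}y = R_{L_2,\lambda}(x - P_{L_1}x).
\end{equation*}

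\emph{Step 2 (orthogonal decomposition).} Write $x = a + b + c$ with $a := P_{L_1}x$, $b := P_{L_2\cap L_1^\perp}x$ and $c := P_{L_2^\perp}x$. These three pieces are mutually orthogonal, and $P_{L_2} = P_{L_1} + P_{L_2\cap L_1^\perp}$ because $L_1\subseteq L_2$. Since $R_{L_2,\lambda}$ acts as the identity on $L_2$ and as multiplication by $(1-\lambda)$ on $L_2^\perp$, we get
\begin{equation*}
y = a + b + (1-\lambda)c \quad\text{and}\quad y - P_{L_1}y = b + (1-\lambda)c.
\end{equation*}
Setting $\alpha := \norm{a}^2$, $\beta := \norm{b}^2$, $\gamma := \norm{c}^2$ and $\mu := (1-\lambda)^2\in[0,1[$, Pythagoras gives
\begin{equation*}
\sin^2_{L_1}(x) = \frac{\beta+\gamma}{\alpha+\beta+\gamma}
\quad\text{and}\quad
\sin^2_{L_1}(y) = \frac{\beta+\mu\gamma}{\alpha+\beta+\mu\gamma}.
\end{equation*}

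\emph{Step 3 (comparison).} Because $\alpha\ge 0$, the map $t\mapsto t/(\alpha+t)$ is nondecreasing on $[0,\infty[$. Since $\beta+\mu\gamma\le\beta+\gamma$, it follows that $\sin^2_{L_1}(y)\le\sin^2_{L_1}(x)$.

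\emph{Step 4 (degenerate cases).} If $x=0$ then $y=0$ and both sines are $0$. If $x\neq 0$ but $y=0$, the left-hand side is $0$ by convention, so the inequality holds trivially. The formula of \cref{p:sine1} covers $P_{L_1}x=0$ consistently, giving sine equal to $1$, and the same holds for $y$.

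The only real obstacle is justifying the three-way decomposition in Step 2, in particular the identity $P_{L_2} = P_{L_1} + P_{L_2\cap L_1^\perp}$. This follows from \cref{f:commute}: $P_{L_2}P_{L_1^\perp}$ is the projector onto $L_2\cap L_1^\perp$. Everything after that is elementary monotonicity.
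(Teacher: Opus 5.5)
Your argument is correct, but it is organized differently from the paper's. The paper works with cosines. For $x\neq0$ and $R_{L_2,\lambda}x\neq0$, it uses $P_{L_1}R_{L_2,\lambda}=P_{L_1}$ from \cref{f:commute} to see that the numerator $\norm{P_{L_1}R_{L_2,\lambda}x}=\norm{P_{L_1}x}$ is unchanged. It then uses \cref{f:reflection1} to see that the denominator $\norm{R_{L_2,\lambda}x}\le\norm{x}$ can only shrink. Hence $\cos_{L_1}$ goes up and $\sin_{L_1}$ goes down.

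Your decomposition $x=a+b+c$ encodes exactly these two facts: $\norm{P_{L_1}y}^2=\alpha$ is invariant, while $\norm{y}^2=\alpha+\beta+\mu\gamma\le\norm{x}^2$. Your monotonicity of $t\mapsto t/(\alpha+t)$ is just $\sin^2_{L_1}=1-\alpha/\norm{\cdot}^2$ in disguise. Because you track the sine numerator, which does change, you need the finer splitting of $L_2$. You also need an extra fact (standard, but not stated in the paper): $P_{L_2}P_{L_1^\perp}$ is the projector onto $L_2\cap L_1^\perp$.

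In return you get an exact formula, $\sin^2_{L_1}(x)-\sin^2_{L_1}(y)=\alpha(1-\mu)\gamma/\bigl((\alpha+\beta+\gamma)(\alpha+\beta+\mu\gamma)\bigr)$. This quantifies the decrease and shows that, whenever $y\neq0$, equality holds iff $\alpha=0$ or $\gamma=0$.

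Two minor points. Step 1 is never used once Step 2 is in place. And for $\alpha=0$ the map $t\mapsto t/(\alpha+t)$ is undefined at $t=0$; this is harmless, since that is exactly the case $y=0$ handled in Step 4.
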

\begin{proof}
    Let $x\in X$. The case in which $x=0$ or $R_{L_2,\lambda}x=0$ is clear.

    When $x\neq0$ and $R_{L_2,\lambda}x\neq0$, by \cref{p:sine1}, we have that
    \begin{equation}
        \cos_{L_1}(x)=\frac{\norm{P_{L_1}x}}{\norm{x}}\quad\text{and}\quad\cos_{L_1}(R_{L_2,\lambda}x)=\frac{\norm{P_{L_1}R_{L_2,\lambda}x}}{\norm{R_{L_2,\lambda}x}}.
    \end{equation}
    By \cref{f:commute} and \cref{f:reflection1}, we obtain
    \begin{equation}
        \cos_{L_1}(R_{L_2,\lambda}x)=\frac{\norm{P_{L_1}x}}{\norm{R_{L_2,\lambda}x}}\geq\frac{\norm{P_{L_1}x}}{\norm{x}}=\cos_{L_1}(x).
    \end{equation}
    This yields $\sin_{L_1}(R_{L_2,\lambda}x)\leq\sin_{L_1}(x)$.
\end{proof}

\begin{definition}[{regularity}]\label{d:regular}
    The collection $\mathcal{L}$ is said to be \emph{regular} if there exists a constant $\kappa>0$ such that\footnotemark
    \begin{equation}
        (\forall x\in X)\quad d_{\cap_{L\in\mathcal{L}}L}(x)\leq\kappa\max_{L\in\mathcal{L}}d_{L}(x).
    \end{equation}
\end{definition}

\begin{remark}\label{r:1}
    Note that this is equivalent to $\sum_{L\in\mathcal{L}}L^\perp$ being closed (see \cite[Theorem~5.19]{BB1996}), which automatically holds when $X$ is finite-dimensional.
\end{remark}

\begin{proposition}
    \label{p:sineregular}
    {\rm \cite[Corollary~3.2]{GT}}
    Let $\mathcal{L}$ be regular. Then, there exists a constant $\kappa>0$ such that
    \begin{equation}
        (\forall x\in X)\quad\sin_{\cap_{L\in\mathcal{L}}}(x)\leq\kappa\max_{L\in\mathcal{L}}\sin_{L}(x).
    \end{equation}
\end{proposition}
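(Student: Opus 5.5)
Writing $M:=\bigcap_{L\in\mathcal{L}}L$, I would deduce the claim directly from the regularity inequality in \cref{d:regular}. The idea is to observe that both sides of the sine inequality are the distance inequality divided by $\norm{x}$, and to handle $x=0$ separately. I would keep the same constant $\kappa>0$ supplied by \cref{d:regular}.

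First, the case $x=0$. By \cref{p:sine1}, $\sin_M(0)=0$ and $\sin_L(0)=0$ for every $L\in\mathcal{L}$, so the inequality reads $0\leq 0$.

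Now let $x\neq 0$. Every closed linear subspace $L$ satisfies $d_L(x)=\norm{x-P_Lx}$, and $M$ is again a closed linear subspace. Hence \cref{p:sine1} gives
\begin{equation*}
\sin_M(x)=\frac{d_M(x)}{\norm{x}},\qquad \sin_L(x)=\frac{d_L(x)}{\norm{x}}\quad(L\in\mathcal{L}).
\end{equation*}
I then divide the regularity inequality $d_M(x)\leq\kappa\max_{L\in\mathcal{L}}d_L(x)$ by $\norm{x}>0$. Since $\norm{x}$ does not depend on $L$, it passes through the maximum, and we obtain $\sin_M(x)\leq\kappa\max_{L\in\mathcal{L}}\sin_L(x)$.

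There is no genuine obstacle here. The only points to check are that the convention-based values at $0$ are consistent, and that the formula in \cref{p:sine1} also covers the case $P_Lx=0$. It does, because the proof of \cref{p:sine1} treats that case explicitly.
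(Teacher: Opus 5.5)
Your proposal is correct and follows the paper's proof essentially verbatim: dispose of $x=0$ via the conventions in \cref{p:sine1}, then for $x\neq 0$ rewrite both sines as distances divided by $\norm{x}$ and divide the regularity inequality of \cref{d:regular} by $\norm{x}$, keeping the same $\kappa$. Your extra checks (that $d_L(x)=\norm{x-P_Lx}$ and that the case $P_Lx=0$ is covered) are the right details to confirm and are consistent with the paper.
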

\begin{proof}
    Let $x\in X$. The case when $x=0$ is clear.

    When $x\neq0$, using \cref{p:sine1}, we obtain
    \begin{equation}
        \sin_{\cap_{L\in\mathcal{L}}}(x)=\frac{d_{\cap_{L\in\mathcal{L}}}(x)}{\norm{x}}.
    \end{equation}
    Then, by \cref{d:regular}, there exists a constant $\kappa>0$ such that
    \begin{equation}
        \sin_{\cap_{L\in\mathcal{L}}}(x)\leq\kappa\frac{\max_{L\in\mathcal{L}}d_L(x)}{\norm{x}}.
    \end{equation}
    This combined with \cref{p:sine1} yields the desired result.
\end{proof}

\begin{definition}[{innate regularity}]
    The collection $\mathcal{L}$ is said to be \emph{innately regular} if every subcollection of $\mathcal{L}$ is regular.
\end{definition}

\begin{remark}\label{r:2}
    Note that this is equivalent to $\sum_{L\in\widetilde{\mathcal{L}}} L^\perp$ being closed for all subcollections $\widetilde{\mathcal{L}}$ of $\mathcal{L}$ (see \cite{HBrandom} and especially \cite[Section~2]{GT} for a nice summary). Again, this condition automatically holds when $X$ is finite-dimensional. Also note that regularity and innate
    regularity do not coincide in general (see \cite{RZBams} and
    also \cite{RZJAA} for further information).
\end{remark}

\footnotetext{Given a subset $C$ of $X$, its distance function is defined by $d_C(x):=\inf\|x-C\|=\inf_{c\in C}\|x-c\|$.}
\section{Random product of relaxed projectors}\label{s:relaxedproj}

In this section, we develop several results which will
make the proof of the main result in the next section
more structured.

Let $\lambda\in\left]0,2\right[$, and let $x^{(0)}:=x\in X$. Consider the random relaxed projection sequence
\begin{equation}
    x^{(n+1)}:=R_n\cdots R_0x,\label{20251111c}
\end{equation}
where $R_n\in\mathcal{R}_{\mathcal{L},\lambda}$. Let $L_n\in\mathcal{L}$ be the subspace associated with $R_n$. For $q\in\mathbb{N}$, we define
\begin{empheq}[box=\mybluebox]{equation}
    \mathbf{L}_q:=\bigcap_{i=0}^q L_i \quad\text{and}\quad N_q:=\bigl|\{L_i\mid i\in\{0,\dots,q\}\}\bigr|.
\end{empheq}

\begin{proposition}
    {\rm \cite[Lemma~3.3 and Proposition~3.6]{GT}}
    \label{p:sine3}
    Suppose that $\mathcal{L}$ is innately regular. Then there exists a $\kappa_*>1$ such that
    \begin{equation}
        (\forall q\in\mathbb{N})(\forall x\in X)\quad\sin_{\mathbf{L}_{q}}\big(x^{(q)}\big)\leq\kappa_*^{N_{q}-1}\max_{i\in\left\{0,\dots,q\right\}}\sin_{L_i}\big(x^{(i)}\big).\label{20251111e}
    \end{equation}
    Moreover, for each $q\in\mathbb{N}$, we have\footnote{
        Here $R_{i-1}\cdots R_0=\Id$ when $i=0$, by the empty product convention.}
    \begin{equation}
        (\forall x\in X)(\exists i\in\left\{0,\dots,q\right\})\quad \norm{R_i\cdots R_0P_{\mathbf{L}_q^\perp}x}\leq\sqrt{1-\lambda(2-\lambda)\kappa_*^{-2(N_{q}-1)}}\norm{R_{i-1}\cdots R_0P_{\mathbf{L}_q^\perp}x};\label{20251111f}
    \end{equation}
    consequently, $\norm{R_q\cdots R_0P_{\mathbf{L}_q^\perp}}\leq\sqrt{1-\lambda(2-\lambda)\kappa_*^{-2(N_{q}-1)}}<1$.
\end{proposition}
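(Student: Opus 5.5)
We prove \cref{20251111e} by induction on $q$, organized by the times at which a \emph{new} subspace first appears among $L_0,\dots,L_q$. Set $\kappa_*:=\max\{2,\kappa\}$, where $\kappa$ is a common constant from \cref{p:sineregular}. Such a common constant exists because $\mathcal{L}$ has finitely many subcollections and each of them, and each of its two-element groupings of intersections, is regular by innate regularity. For $q=0$ we have $\mathbf{L}_0=L_0$ and $N_0=1$, so the claim is trivial.

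For the inductive step $q-1\to q$ we distinguish two cases.

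\emph{Case 1: $L_q\in\{L_0,\dots,L_{q-1}\}$.} Then $\mathbf{L}_q=\mathbf{L}_{q-1}\subseteq L_{q-1}$ and $N_q=N_{q-1}$. Since $x^{(q)}=R_{q-1}x^{(q-1)}$ and $\mathbf{L}_{q-1}\subseteq L_{q-1}$, \cref{p:sine2} gives
\[
\sin_{\mathbf{L}_q}\big(x^{(q)}\big)\leq\sin_{\mathbf{L}_{q-1}}\big(x^{(q-1)}\big),
\]
and the induction hypothesis finishes this case.

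\emph{Case 2: $L_q$ is new.} Then $\mathbf{L}_q=\mathbf{L}_{q-1}\cap L_q$ and $N_q=N_{q-1}+1$. Apply \cref{p:sineregular} to the pair $\{\mathbf{L}_{q-1},L_q\}$; regularity of this pair is equivalent to closedness of $\mathbf{L}_{q-1}^\perp+L_q^\perp=\sum_{i\le q}L_i^\perp$ (closure already taken into account), which holds by innate regularity and \cref{r:2}. This yields
\[
\sin_{\mathbf{L}_q}\big(x^{(q)}\big)\le\kappa_*\max\big\{\sin_{\mathbf{L}_{q-1}}\big(x^{(q)}\big),\sin_{L_q}\big(x^{(q)}\big)\big\}.
\]
The first term is bounded, as in Case 1, by $\kappa_*^{N_{q-1}-1}\max_{i\le q-1}\sin_{L_i}(x^{(i)})$. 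Multiplying through by $\kappa_*$ gives the exponent $N_q-1$.

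A subtlety in Case 2 is that a straightforward induction multiplies the constant by $\kappa_*$ at every step. This is avoided because Case 1 incurs no factor at all; only new subspaces cost a factor $\kappa_*$. The remaining point to check carefully is that the constant is uniform over all subcollections and over the pairs $\{\mathbf{L}_{q-1},L_q\}$, of which there are only finitely many.

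For \cref{20251111f}, take $y:=P_{\mathbf{L}_q^\perp}x$ and run the sequence from $y$. By \cref{f:commute}, each $R_i$ commutes with $P_{\mathbf{L}_q^\perp}$, since $\mathbf{L}_q\subseteq L_i$. Hence $y^{(q)}=R_{q-1}\cdots R_0y\in\mathbf{L}_q^\perp$, and so $\sin_{\mathbf{L}_q}(y^{(q)})=1$ whenever $y^{(q)}\neq0$. Applying \cref{20251111e} to this sequence gives an index $i\le q$ with
\[
\sin_{L_i}\big(y^{(i)}\big)\ge\kappa_*^{-(N_q-1)}.
\]
\cref{p:sine1} with $\varepsilon=\kappa_*^{-(N_q-1)}$ then gives the claimed contraction at step $i$; if $y^{(q)}=0$ the claim is trivial.

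For the ``consequently'' part, combine this one contractive step with $\|R_j\|\le1$ for all other $j$ (\cref{f:reflection1}). Taking the supremum over $x$ in the unit ball then bounds the operator norm.
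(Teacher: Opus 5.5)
Your proof is correct and follows essentially the same route as the paper. Like the paper, you induct on $q$, splitting into the case where $L_q$ repeats (handled by \cref{p:sine2}) and the case where it is new (handled by \cref{p:sineregular} on $\{\mathbf{L}_{q-1},L_q\}$, whose regularity comes from closedness of $\sum_{i\le q}L_i^\perp$); you then restrict to $\mathbf{L}_q^\perp$, use commutation to get $\sin_{\mathbf{L}_q}=1$, and invoke \cref{p:sine1}. The only cosmetic difference is that you evaluate the sine at $y^{(q)}=R_{q-1}\cdots R_0y$, which matches \cref{20251111e} exactly; the paper instead works with $R_q\cdots R_0P_{\mathbf{L}_q^\perp}x$, which tacitly needs one more application of \cref{p:sine2}.
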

\begin{proof}
    Let $\mathcal{L}_1$ and $\mathcal{L}_2$ be subcollections of $\mathcal{L}$. Note that by the innate regularity of $\mathcal{L}$ and \cref{r:2}, we have
    \begin{equation}
        \Big(\bigcap_{L\in\mathcal{L}_1}L\Big)^\perp+\Big(\bigcap_{L\in\mathcal{L}_2}L\Big)^\perp=\overline{\sum_{L\in\mathcal{L}_1}L^\perp}+\overline{\sum_{L\in\mathcal{L}_2}L^\perp}=\sum_{L\in\mathcal{L}_1}L^\perp+\sum_{L\in\mathcal{L}_2}L^\perp=\sum_{L\in\mathcal{L}_1\cup\mathcal{L}_2}L^\perp,\label{263003a}
    \end{equation}
    and $\sum_{L\in\mathcal{L}_1\cup\mathcal{L}_2}L^\perp$ is closed. This combined with \cref{263003a} and \cref{r:1} yields $\left\{\bigcap_{L\in\mathcal{L}_1}L,\bigcap_{L\in\mathcal{L}_2}L\right\}$ is regular.

    Let $\kappa_*$ be the maximum constant arising from \cref{p:sineregular} when applied to the collection $\left\{\bigcap_{L\in\mathcal{L}_1}L,\bigcap_{L\in\mathcal{L}_2}L\right\}$, where the maximum is taken over all subcollections $\mathcal{L}_1$ and $\mathcal{L}_2$ of $\mathcal{L}$. WLOG, we can assume $\kappa_*>1$.

    We will prove \cref{20251111e} by induction (on $q$).
    The base case ($q=0$) states that
    \begin{equation}
        (\forall x\in X)\quad\sin_{\mathbf{L}_{0}}\left(x\right)\leq\kappa_*^{N_{0}-1}\sin_{L_0}\left(x\right),
    \end{equation}
    and this is always true because $\mathbf{L}_0=L_0$ and $N_0=1$.

    Let $n\in\mathbb{N}$. Assume that the following statement holds true
    \begin{equation}
        (\forall x\in X)\quad\sin_{\mathbf{L}_{n}}\big(x^{(n)}\big)\leq\kappa_*^{N_{n}-1}\max_{i\in\left\{0,\dots,n\right\}}\sin_{L_i}\big(x^{(i)}\big).\label{20251111g}
    \end{equation}
    If $N_{n+1}=N_{n}$, then $\mathbf{L}_{n+1}=\mathbf{L}_{n}$.
    Hence, we obtain
    \begin{subequations}
        \begin{align}
            (\forall x\in X)\quad\sin_{\mathbf{L}_{n+1}}\big(x^{(n+1)}\big)=\quad\quad\  & \sin_{\mathbf{L}_{n}}\big(R_{n}x^{(n)}\big)                                           \\
            \overset{\text{\cref{p:sine2}}}{\leq}\                                       & \sin_{\mathbf{L}_{n}}\big(x^{(n)}\big)                                                \\
            \overset{\cref{20251111g}}{\leq}\quad\quad                                   & \kappa_*^{N_{n}-1}\max_{i\in\left\{0,\dots,n\right\}}\sin_{L_i}\big(x^{(i)}\big)      \\
            \leq\quad\;\;\;\;                                                            & \kappa_*^{N_{n+1}-1}\max_{i\in\left\{0,\dots,n+1\right\}}\sin_{L_i}\big(x^{(i)}\big).
        \end{align}
    \end{subequations}

    If $N_{n+1}=N_{n}+1$, then applying \cref{p:sineregular} to the collection $\left\{\mathbf{L}_{n},L_{n+1}\right\}$ yields, for all $x\in X$,
    \begin{subequations}
        \begin{align}
            \sin_{\mathbf{L}_{n+1}}\big(x^{(n+1)}\big)\leq\quad\quad\  & \kappa_*\max\Big\{\sin_{\mathbf{L}_{n}}\big(x^{(n+1)}\big),\sin_{L_{n+1}}\big(x^{(n+1)}\big)\Big\}                                                           \\
            \overset{\text{\cref{p:sine2}}}{\leq}\                     & \kappa_*\max\Big\{\sin_{\mathbf{L}_{n}}\big(x^{(n)}\big),\sin_{L_{n+1}}\big(x^{(n+1)}\big)\Big\}                                                             \\
            \overset{\cref{20251111g}}{\leq}\quad\quad                 & \kappa_*\max\Big\{\kappa_*^{N_{n}-1}\max_{i\in\left\{0,\dots,n\right\}}\sin_{L_i}\big(x^{(i)}\big),\sin_{L_{n+1}}\big(x^{(n+1)}\big)\Big\}                   \\
            \overset{\kappa_*>1}{\leq}\quad\:\:\:\,                    & \kappa_*\max\Big\{\kappa_*^{N_{n}-1}\max_{i\in\left\{0,\dots,n\right\}}\sin_{L_i}\big(x^{(i)}\big),\kappa_*^{N_{n}-1}\sin_{L_{n+1}}\big(x^{(n+1)}\big)\Big\} \\
            =\quad\;\;\;\;                                             & \kappa_*^{N_{n+1}-1}\max_{i\in\left\{0,\dots,n+1\right\}}\sin_{L_i}\big(x^{(i)}\big).
        \end{align}
    \end{subequations}
    Hence, \cref{20251111e} is proven.

    Since \cref{20251111e} is true for every sequence starting from $X$, it also holds for every sequence starting from $\mathbf{L}_q^\perp$, that is,
    \begin{equation}
        (\forall q\in\mathbb{N})(\forall x\in \mathbf{L}_q^\perp)\quad\sin_{\mathbf{L}_{q}}\big(R_q\cdots R_0x\big)\leq\kappa_*^{N_{q}-1}\max_{i\in\left\{0,\dots,q\right\}}\sin_{L_i}\big(R_{i-1}\cdots R_0x\big).
    \end{equation}
    This is equivalent to
    \begin{equation}
        (\forall q\in\mathbb{N})(\forall x\in X)\quad\sin_{\mathbf{L}_{q}}\big(R_q\cdots R_0P_{\mathbf{L}_q^\perp}x\big)\leq\kappa_*^{N_{q}-1}\max_{i\in\left\{0,\dots,q\right\}}\sin_{L_i}\big(R_{i-1}\cdots R_0P_{\mathbf{L}_q^\perp}x\big).\label{20251120a}
    \end{equation}

    By \cref{f:commute}, we obtain
    \begin{equation}
        R_q\cdots R_0P_{\mathbf{L}_q^\perp}x=P_{\mathbf{L}_q^\perp}R_q\cdots R_0x\in\mathbf{L}_q^\perp.\label{20251120b}
    \end{equation}
    If $R_q\cdots R_0P_{\mathbf{L}_q^\perp}x=0$, then
    \begin{equation}
        \norm{R_q\cdots R_0P_{\mathbf{L}_q^\perp}x}=0\leq\sqrt{1-\lambda(2-\lambda)\kappa_*^{-2(N_{q}-1)}}\norm{R_{q-1}\cdots R_0P_{\mathbf{L}_q^\perp}x}.
    \end{equation}
    For $R_q\cdots R_0P_{\mathbf{L}_q^\perp}x\neq0$, \cref{20251120b} and \cref{p:sine1} imply $\sin_{\mathbf{L}_q}\big(R_q\cdots R_0P_{\mathbf{L}_q^\perp}x\big)=1$. It follows from \cref{20251120a} that for each $q\in\mathbb{N}$:
    \begin{equation}
        (\exists i\in\{0,\dots,q\})\quad\sin_{L_i}\big(R_{i-1}\cdots R_0P_{\mathbf{L}_q^\perp}x\big)\geq\kappa_*^{-(N_q-1)}.
    \end{equation}
    By \cref{p:sine1}, this is equivalent to
    \begin{equation}
        (\exists i\in\{0,\dots,q\})\quad\norm{R_i\cdots R_0P_{\mathbf{L}_q^\perp}x}\leq\sqrt{1-\lambda(2-\lambda)\kappa_*^{-2(N_{q}-1)}}\norm{R_{i-1}\cdots R_0P_{\mathbf{L}_q^\perp}x}.
    \end{equation}
    Since relaxed projectors are linear and nonexpansive (see \cref{f:reflection1}), the ``consequently'' part then follows.
\end{proof}

\begin{remark}
    For $\lambda = 1$
    and $q\in\mathbb{N}\smallsetminus\{0\}$,
    the conclusion of
    \cref{p:sine3} holds with
    $i \in \{1,\dots,q\}$.
\end{remark}
\begin{proof}
    Let $\kappa_*$ be the maximum constant arising from \cref{p:sineregular} when applied to the collection $\left\{\bigcap_{L\in\mathcal{L}_1}L,\bigcap_{L\in\mathcal{L}_2}L\right\}$, where the maximum is taken over all subcollections $\mathcal{L}_1$ and $\mathcal{L}_2$ of $\mathcal{L}$.
    WLOG, we assume that $\kappa_*>1$.

    We will prove this by induction (on $q$). The base case $q=1$ now states:
    \begin{equation}
        (\forall x\in X)\quad\sin_{\mathbf{L}_{1}}\big(x^{(1)}\big)\leq\kappa_*^{N_{1}-1}\max\Big\{\sin_{L_1}\big(x^{(1)}\big)\Big\}.
    \end{equation}
    If $N_1=N_0=1$, i.e., $L_1=L_0$, then $\sin_{\mathbf{L_1}}\big(x^{(1)}\big)=\sin_{{L_1}}\big(x^{(1)}\big)$ and we are done.

    If $N_1=N_0+1=2$, then
    \begin{subequations}
        \begin{align}
            (\forall x\in X)\quad\sin_{\mathbf{L}_{1}}\big(x^{(1)}\big) & =\sin_{L_0\cap L_1}\big(R_{0}x\big)\notag                                                                       \\
                                                                        & \leq\kappa_*\max\Big\{\sin_{L_0}\big(R_{0}x\big),\sin_{L_1}\big(x^{(1)}\big)\Big\}\tag{by \cref{p:sineregular}} \\
                                                                        & =\kappa_*\max\Big\{\sin_{L_0}\big(P_{L_0}x\big),\sin_{L_1}\big(x^{(1)}\big)\Big\}\tag{since $\lambda=1$}        \\
                                                                        & =\kappa_*\max\Big\{0,\sin_{L_1}\big(x^{(1)}\big)\Big\}\tag{by \cref{p:sine1}},
        \end{align}
    \end{subequations}
    which completes the proof of the base case. The remaining part of the proof is identical to that of \cref{p:sine3}, except that here $i \in \{1, \dots, q\}$.
\end{proof}

\begin{remark}
    In the original paper, \cref{p:sine3} is stated with
    $\sin_{\mathbf{L}_{q}}(x^{(q+1)})$ and $\kappa_*^{N_q}$ instead of
    $\sin_{\mathbf{L}_{q}}(x^{(q)})$ and $\kappa_*^{N_q-1}$; however, the proof is essentially the same.
\end{remark}

{
    We now introduce a notion that will be useful not
    only in reformulating \cref{p:sine3} but also
    in the proof of \cref{t:meshulaminf}:
}

\begin{definition}[cycle]
    \label{def:cycle}
    Let $\lambda\in\left]0,2\right[$. A finite product $Q$ of relaxed projectors in $\mathcal{R}_{\mathcal{L},\lambda}$ that satisfies both
    \begin{subequations}
        \begin{align}
             & \text{for every $L\in\mathcal{L}$, the
                   relaxed projector $R_{L,\lambda}$
                   appears at least once in $Q$, and }
            \label{eq:cycle1}                                  \\
             & \text{there exists $L\in\mathcal{L}$ such
                   that the relaxed projector $R_{L,\lambda}$
                   appears exactly once in $Q$},
            \label{eq:cycle2}
        \end{align}
    \end{subequations}
    is called a \emph{cycle}. We denote by $\mathcal{Q}$ the set of all cycles\footnotemark.
    \footnotetext{{Technically speaking, $\mathcal{Q}$
            depends on $\lambda$; however, in our usage,
            the underlying $\lambda$ will be clear from the context.}}
\end{definition}

\begin{example}
    Suppose that $\mathcal{L}=\{L_1,L_2,L_3\}$ and $\lambda\in\left]0,2\right[$. The following products are all cycles:
    \begin{equation}
        R_{L_1,\lambda} R_{L_2,\lambda} R_{L_3,\lambda},\;\;
        R_{L_1,\lambda} R_{L_2,\lambda} R_{L_1,\lambda} R_{L_3,\lambda},
        \;\;
        R_{L_1,\lambda} R_{L_2,\lambda} R_{L_3,\lambda} R_{L_2,\lambda} R_{L_1,\lambda}.
    \end{equation}
    On the other hand, none of the following products is a cycle:
    \begin{equation}
        R_{L_1,\lambda},\;\;
        R_{L_1,\lambda} R_{L_2,\lambda},\;\;
        R_{L_1,\lambda} R_{L_2,\lambda} R_{L_3,\lambda} R_{L_1,\lambda} R_{L_2,\lambda} R_{L_3,\lambda}.
    \end{equation}
\end{example}

It will also be convenient to set
\begin{empheq}[box=\mybluebox]{equation}
    \mathbf{L}:=\bigcap_{L\in\mathcal{L}}L
    \quad\text{and}\quad \ell:=|\mathcal{L}|.
\end{empheq}

\begin{corollary}\label{c:contraction}
    Suppose that $\mathcal{L}$ is innately regular. Then there exists a $\kappa_*>1$ such that
    \begin{equation}
        (\forall Q\in\mathcal{Q})\quad \norm{QP_{\mathbf{L}^\perp}}\leq\sqrt{1-\lambda(2-\lambda)\kappa_*^{-2(\ell-1)}} < 1.
        \label{e:251208a}
    \end{equation}
\end{corollary}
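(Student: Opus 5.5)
The corollary follows from the ``consequently'' part of \cref{p:sine3}, applied with the factors of $Q$ as the sequence $R_0,\dots,R_q$. Let $Q\in\mathcal{Q}$ and write $Q=R_q\cdots R_0$ with each $R_i\in\mathcal{R}_{\mathcal{L},\lambda}$ and associated subspace $L_i\in\mathcal{L}$. Because of the cycle property \cref{eq:cycle1}, every $L\in\mathcal{L}$ appears among $L_0,\dots,L_q$. Hence $\mathbf{L}_q=\bigcap_{i=0}^q L_i=\bigcap_{L\in\mathcal{L}}L=\mathbf{L}$ and $N_q=\ell$.

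\cref{p:sine3} then yields a constant $\kappa_*>1$ with
\begin{equation*}
\norm{QP_{\mathbf{L}^\perp}}=\norm{R_q\cdots R_0P_{\mathbf{L}_q^\perp}}\leq\sqrt{1-\lambda(2-\lambda)\kappa_*^{-2(N_q-1)}}=\sqrt{1-\lambda(2-\lambda)\kappa_*^{-2(\ell-1)}}.
\end{equation*}
The strict inequality $<1$ holds because $\lambda\in\left]0,2\right[$ gives $\lambda(2-\lambda)>0$. The radicand is also nonnegative, since $\lambda(2-\lambda)\leq1$ and $\kappa_*^{-2(\ell-1)}\leq1$.

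The only point that needs care is uniformity: the same $\kappa_*$ must work for every cycle $Q$. This is immediate from the proof of \cref{p:sine3}. There $\kappa_*$ is chosen as the maximum of the regularity constants over the finitely many pairs of subcollections $\mathcal{L}_1,\mathcal{L}_2$ of $\mathcal{L}$, so it depends only on $\mathcal{L}$, and not on $q$, the chosen sequence, or $x$. The exponent also involves only $\ell$.

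Condition \cref{eq:cycle2} is not needed for this bound; it matters only in later uses of cycles. So there is no real obstacle here beyond checking that $\kappa_*$ is independent of $Q$.
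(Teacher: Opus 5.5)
Your proposal is correct and follows the paper's proof: write $Q=R_q\cdots R_0$, use the cycle property to get $\mathbf{L}_q=\mathbf{L}$ and $N_q=\ell$, and invoke the ``consequently'' part of \cref{p:sine3}. Your added checks are correct details the paper leaves implicit: $\kappa_*$ depends only on $\mathcal{L}$, so the bound is uniform over cycles, and the radicand lies in $[0,1[$.
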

\begin{proof}
    Let $Q\in\mathcal{Q}$. By the definition of $\mathcal{Q}$, we have $Q=R_q\cdots R_0$ for some $q\in\mathbb{N}$, with $R_i\in\mathcal{R}_{\mathcal{L}}$ for all $i\in\{0,\dots,q\}$. Moreover, we also have $\mathbf{L}_q=\mathbf{L}$ and $N_q=\ell$.
    The result then follows from the ``Consequently'' part of \cref{p:sine3}.
\end{proof}

\section{{Meshulam's result} in infinite-dimensional spaces}\label{s:fixedlambda}

{Recall \cref{e:mcA} and \cref{e:mcL}.}
In this section, we assume the following: For each $A \in \mathcal{A}$, write $A
    = a + L$, where $L:=A-A$ is the closed linear subspace parallel to $A$ and
$\{a\}:= P_{L^\perp}(A) \subseteq L^\perp \cap A$;
the collection of all such translation vectors is denoted
by $\mathcal{T}$.

\begin{remark}
    Note that for each $A\in\mathcal{A}$, the set $P_{L^\perp}(A)$ is indeed a singleton. To prove this, take any $a_1,a_2\in A$. By the definition of $L$, we have that $a_1-a_2\in A-A=L$. Hence, we obtain $P_{L^\perp}(a_1-a_2)=0$, i.e., $P_{L^\perp}(a_1)=P_{L^\perp}(a_2)$.
\end{remark}

\begin{lemma}\label{l:meshulaminf}
    Let $(A_n)_{n\in\mathbb{N}}$ be a sequence drawn from $\mathcal{A}$, with associated linear subspaces $(L_n)_{n\in\mathbb{N}}$ in $\mathcal{L}$ and translation vectors $(a_n)_{n\in\mathbb{N}}$ in $\mathcal{T}$. Let $\lambda\in\left]0,2\right[$, $x_0 \in X$, and consider the sequence $(x_n)_\nnn$
    generated by
    \begin{equation}
        (\forall\nnn)\quad
        x_{n+1} := R_{A_{n},\lambda}x_{n}
        = R_nx_{n} + \lambda a_n,
    \end{equation}
    where $R_n := R_{L_n,\lambda}$. Then
    \begin{equation}
        \label{e:250428b}
        (\forall\nnn)\quad
        x_{n+1} = R_n\cdots R_0x_0 + \lambda\sum_{j=0}^n R_n\cdots R_{j+1}a_j.
    \end{equation}

\end{lemma}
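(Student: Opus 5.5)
The proof has two parts. First we verify the identity $R_{A_n,\lambda}x = R_n x + \lambda a_n$ asserted in the definition of the sequence. Then we obtain \cref{e:250428b} by induction on $n$.

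For the first part, fix $A\in\mathcal{A}$ with $A = a + L$ and $a\in L^\perp$. The translation formula for projections onto affine sets gives
\begin{equation*}
    P_A x = a + P_L(x-a).
\end{equation*}
This holds because $a+P_L(x-a)\in A$, and $x - (a+P_L(x-a)) = P_{L^\perp}(x-a)\perp L$, which characterizes the nearest point in $A$. Since $a\in L^\perp$, we have $P_L a = 0$, so $P_A x = a + P_L x$. Hence
\begin{equation*}
    R_{A,\lambda}x = (1-\lambda)x + \lambda a + \lambda P_L x = R_{L,\lambda}x + \lambda a.
\end{equation*}
Applied with $A=A_n$, this yields $x_{n+1} = R_n x_n + \lambda a_n$.

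For the second part, we induct on $n$. The base case $n=0$ reads $x_1 = R_0x_0 + \lambda a_0$. This is exactly the recursion, with the empty product $R_0\cdots R_1$ interpreted as $\Id$, as in the footnote to \cref{p:sine3}. For the inductive step, assume \cref{e:250428b} holds for $n$. Then
\begin{equation*}
    x_{n+2} = R_{n+1}x_{n+1} + \lambda a_{n+1} = R_{n+1}R_n\cdots R_0x_0 + \lambda\sum_{j=0}^n R_{n+1}R_n\cdots R_{j+1}a_j + \lambda a_{n+1}.
\end{equation*}
Here we use that $R_{n+1}$ is linear, being an affine combination of $\Id$ and the linear operator $P_{L_{n+1}}$, so it distributes over the sum and commutes with the scalar $\lambda$. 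The final term $\lambda a_{n+1}$ is the $j=n+1$ summand, again with the empty-product convention. This gives \cref{e:250428b} for $n+1$.

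There is no real obstacle. The only points needing care are the affine projection formula $P_A = a + P_L$ (where $a\in L^\perp$ is essential) and consistent use of the empty-product convention for the index $j=n$.
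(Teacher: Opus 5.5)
Your proof is correct and follows the paper's argument: induction on $n$, using the linearity of $R_{n+1}$ and the empty-product convention for the $j=n+1$ summand. The paper builds the identity $R_{A_n,\lambda}x_n = R_nx_n+\lambda a_n$ into the statement of the recursion, so your verification of $P_Ax = a+P_Lx$ (which uses $a\in L^\perp$) is a correct addition rather than a required step.
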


{
    \let\endproof\relax
    \begin{proof}
        We will prove it by induction on $n\in\mathbb{N}$.
        For $n=0$, we have
        \begin{equation}
            x_1 =
            R_0x_0 + \lambda a_0,
        \end{equation}
        where we used the empty product convention.
        Now assume \eqref{e:250428b} holds for some $n\in\mathbb{N}$.
        Then
        \begin{subequations}
            \begin{align}
                x_{n+2}
                 & =
                R_{A_{n+1},\lambda}x_{n+1}
                =
                R_{n+1}x_{n+1} +\lambda a_{n+1}
                \\
                 & =
                R_{n+1}\Big(R_{n}\cdots R_0x_0 + \lambda\sum_{j=0}^n R_n\cdots R_{j+1}a_j\Big)
                +\lambda a_{n+1}
                \\
                 & =
                R_{n+1}R_n\cdots R_0x_0 + \lambda\sum_{j=0}^{n+1} R_{n+1}R_n\cdots R_{j+1}a_j,
            \end{align}
        \end{subequations}
        which completes the proof.
    \end{proof}
}

Now, the remaining work lies in analyzing
\begin{equation}
    \sum_{j=0}^n R_n\cdots R_{j+1}a_j.
\end{equation}
{ We will do this in the following:}

\begin{theorem}[main result for a fixed relaxation parameter]
    \label{t:meshulaminf}
    Recall \cref{e:mcA} and \cref{e:mcL}.
    Suppose that $\mathcal{L}$ is innately regular
    and that $\lambda\in\left]0,2\right[$.
    Then there exists a positive constant $C_{\mathcal{A},\lambda}<+\infty$ such that for any
    sequence $(A_n)_{n\in\mathbb{N}}$ drawn from $\mathcal{A}$ with associated linear subspaces $(L_n)_{n\in\mathbb{N}}$ in $\mathcal{L}$ and translation vectors $(a_n)_{n\in\mathbb{N}}$ in $\mathcal{T}$,
    and any starting point $x_0 \in X$, the
    sequence $(x_n)_\nnn$ generated by the iteration
    \begin{equation}
        x_{n+1} := R_{A_{n},\lambda}x_{n}
        = R_nx_{n} + \lambda a_n,
    \end{equation}
    where $R_n := R_{L_n,\lambda}$,
    satisfies
    \begin{equation}
        \label{260127a}
        (\forall n\in\mathbb{N})\quad \Big\|
        \sum_{j=0}^n R_n\cdots R_{j+1}a_j\Big\| \leq C_{\mathcal{A},\lambda};
        \quad\text{consequently,}\quad
        \|x_n\| \leq \|x_0\|+\lambda C_{\mathcal{A},\lambda}.
    \end{equation}
\end{theorem}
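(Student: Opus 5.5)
We argue by induction on $\ell=|\mathcal{A}|$, the number of affine subspaces, proving the stronger uniform statement: for every finite collection $\mathcal{A}$ whose $\mathcal{L}$ is innately regular, there is $C_{\mathcal{A},\lambda}$ bounding $\|\sum_{j=0}^n R_n\cdots R_{j+1}a_j\|$ over \emph{all} sequences and all $n$. Innate regularity passes to subcollections, so the induction hypothesis applies to every proper subcollection $\mathcal{A}'\subsetneq\mathcal{A}$. The consequence $\|x_n\|\le\|x_0\|+\lambda C_{\mathcal{A},\lambda}$ then follows from \cref{l:meshulaminf} and the nonexpansiveness of $R_n\cdots R_0$ (\cref{f:reflection1}).

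The base case $\ell=1$ is direct. Here $a_j=a\in L^\perp$, so $R_L a=(1-\lambda)a$, and the sum equals $\sum_{k=0}^{n}(1-\lambda)^k a$. Since $|1-\lambda|<1$, this is bounded by $\|a\|/(1-|1-\lambda|)$.

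For the inductive step, write $y_n:=\sum_{j=0}^{n} R_n\cdots R_{j+1}a_j$, so that $y_{n}=R_n y_{n-1}+a_n$. Split $y_n=P_{\mathbf{L}}y_n+P_{\mathbf{L}^\perp}y_n$.

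\textbf{The $\mathbf{L}$-component.} By \cref{f:commute}, $P_{\mathbf{L}}R_n=P_{\mathbf{L}}$, and $a_n\in L_n^\perp\subseteq\mathbf{L}^\perp$, so $P_{\mathbf{L}}a_n=0$. Hence $P_{\mathbf{L}}y_n=0$ and only $P_{\mathbf{L}^\perp}y_n$ matters.

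\textbf{Partition into blocks.} Cut the index sequence into consecutive blocks as follows. Starting at a time $s$, extend the block until the first time $t$ at which all $\ell$ subspaces have appeared in $L_s,\dots,L_t$. Then $L_t$ appears exactly once in the block, so the product $Q=R_t\cdots R_s$ is a cycle, and \cref{c:contraction} gives $\|QP_{\mathbf{L}^\perp}\|\le\rho<1$ with $\rho$ uniform over cycles. Within a block, the indices $s,\dots,t-1$ use at most $\ell-1$ distinct subspaces. The partial sum generated by those indices is a sum of the same form for a proper subcollection, so by the induction hypothesis it is bounded by $\max_{\mathcal{A}'\subsetneq\mathcal{A}}C_{\mathcal{A}',\lambda}$. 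Adding the last term $a_t$ and applying a nonexpansive $R_t$ gives a per-block contribution $b_k$ with $\|b_k\|\le M:=\max_{\mathcal{A}'\subsetneq\mathcal{A}}C_{\mathcal{A}',\lambda}+\max_{a\in\mathcal{T}}\|a\|$. The same bound $M$ covers the final incomplete block, and also any intermediate $n$ falling inside a block.

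\textbf{Geometric summation.} Let $z_k:=P_{\mathbf{L}^\perp}y$ at the end of block $k$. Since $P_{\mathbf{L}^\perp}$ commutes with every $R_i$ (\cref{f:commute}), we get $z_{k+1}=Q_{k+1}P_{\mathbf{L}^\perp}z_k+P_{\mathbf{L}^\perp}b_{k+1}$. Therefore $\|z_{k+1}\|\le\rho\|z_k\|+M$, so $\|z_k\|\le M/(1-\rho)$ for all $k$. For an index $n$ inside the current incomplete block, the expression is a nonexpansive image of the last $z_k$ plus a partial block sum bounded by $M$. This yields the bound $C_{\mathcal{A},\lambda}:=M/(1-\rho)+M$.

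The main obstacle is the bookkeeping in the block decomposition: correctly identifying the within-block partial sums as instances of the theorem for a proper subcollection (with re-indexed translation vectors), and checking that the cycle contraction of \cref{c:contraction} applies to $P_{\mathbf{L}^\perp}$ of the accumulated vector rather than to the new terms. The commutation in \cref{f:commute} is what makes the latter work.
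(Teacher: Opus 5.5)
Your proposal is correct and follows essentially the paper's proof: induction on the number of subspaces, consecutive cycle blocks each contributing at most your $M$ (the paper's $\tau+D$) via the induction hypothesis, the uniform contraction $\|QP_{\mathbf{L}^\perp}\|\le\rho<1$ from \cref{c:contraction} combined with the commutation of $P_{\mathbf{L}^\perp}$ with every $R_i$, and a geometric series. The only differences are cosmetic: you cut blocks forward in time and run the recursion $\|z_{k+1}\|\le\rho\|z_k\|+M$, so the incomplete block sits at the end and costs an extra $+M$, whereas the paper scans $R_n\cdots R_1$ backward from $n$, places the noncycle remainder at the beginning (where it is damped as well), and sums the telescoped expansion directly to get $(\tau+D)/(1-\rho)$.
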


{
\let\endproof\relax
\begin{proof}
    {In view of \cref{e:250428b}, the left inequality in
        \cref{260127a} implies the right inequality
        in \cref{260127a}.}

    We will prove the left inequality
    in \cref{260127a} by {strong} induction on the number of subspaces.
    For the base case $\ell:=|\mathcal{L}|=1$, i.e., $\mathcal{L}=\{L\}$ and $\mathcal{A}=\{a+L\}$,
    since $a\in L^\perp$, we have
    \begin{equation}
        (\forall n\in\mathbb{N})\quad\Big\|\sum_{j=0}^n R_n\cdots R_{j+1}a_j\Big\|
        =\Big\|\sum_{j=0}^n R_{L,\lambda}^{n-j}a\Big\|=\Big\| \sum_{j=0}^n (1-\lambda)^{n-j}a\Big\|.
    \end{equation}
    This combined with the triangle inequality yields
    \begin{equation}
        (\forall n\in\mathbb{N})\quad\Big\|\sum_{j=0}^n R_n\cdots R_{j+1}a_j\Big\|\leq\sum_{j=0}^n |1-\lambda|^{n-j}\| a\|
        = \frac{1-|1-\lambda|^{n+1}}{1-|1-\lambda|}\|a\| \leq\frac{\norm{a}}{1-|1-\lambda|}<+\infty.
    \end{equation}
    Thus, the conclusion holds
    with $C_{\mathcal{A},\lambda} = \|a\|/(1-|1-\lambda|)$.

    Let $\ell\in\mathbb{N}$, $\ell\geq2$. Assume that the statement holds for all collections of closed linear subspaces $\widetilde{\mathcal{L}}$ with $|\widetilde{\mathcal{L}}|\leq \ell-1$. Now, let $\mathcal{L}$ be a collection with $|\mathcal{L}|=\ell$. Since $\mathcal{L}$ is finite, it only contains a finite number of proper subcollections. By the induction hypothesis, each proper subcollection is then associated with a positive constant. We denote $D$ to be the maximum of all such constants.

    Fix $n\in\mathbb{N}$. If the product $R_n\cdots R_1$ does not contain any cycle, then the collection of subspaces $\mathcal{L}_n$ associated with $R_n\cdots R_1$,
    { i.e., $\{L_1,\ldots,L_n\}$}, has less than $\ell$ elements.  Hence,
    \begin{equation}
        \Big\|\sum_{j=0}^n R_n\cdots R_{j+1}a_j\Big\|\leq\norm{R_n\cdots R_{1}a_0}+\Big\|\sum_{j=1}^n R_n\cdots R_{j+1}a_j\Big\|\leq \tau +D<+\infty,
    \end{equation}
    where {$\tau = \max \|\mathcal{T}\|$. }

    Now suppose that the product $R_n\cdots R_1$ contains at least one cycle.
    We scan the composition $R_n\cdots R_1$ from left to right,
    picking up the cycles as we go. Either the composition fully factors
    into cycles or there is a noncycle left:
    That is, the index list
    $(n,\ldots,1)$ is broken up into sublists
    as follows:
    \begin{equation}
        (p_{k_{n}},\dots,p_{k_n-1}+1) \cup (p_{k_{n}-1},\dots,p_{k_n-2}+1) \cup \cdots \cup (p_1,\dots,p_0+1)\cup(p_0,\dots,1),
    \end{equation}
    where $p_{k_n}=n$.
    So we have $k_n$ cycles in the composition (represented by the left $k_n$ sublists) and either $p_0=0$, which means complete
    factorization into cycles and $(0,\ldots,1)$ does not appear, or
    $p_0\geq 1$ and $(p_0,\ldots,1)$ represents the noncycle $R_{p_0}\cdots R_1$.

    Note that
    for each $i\in\{0,\dots,k_n\}$, $p_i$ is the largest index $j\in\{0,\dots,n\}$ such that the product $R_n\cdots R_{j+1}$ is fully factored into exactly $k_n - i$ cycles (with no remaining noncyle).

    For $0\leq r\leq s \leq n$, we define
    \begin{equation}
        q(s,r) := \sum_{j=r}^s R_s\cdots R_{j+1}a_j.
    \end{equation}
    The empty product convention gives $q(r,r)=a_r$.
    Our goal is to get $\norm{q(n,0)}$ universally bounded.

    By the definition of $(a_n)_{n\in\mathbb{N}}$, we have $a_j\in L_j^\perp\subseteq\mathbf{L}^\perp$ for all $j\in\{0,\ldots,n\}$.
    Hence, we get
    \begin{equation}
        (\forall 0\leq r\leq s\leq n)\quad q(s,r)=\sum_{j=r}^s R_s\cdots R_{j+1}a_j=\sum_{j=r}^s R_s\cdots R_{j+1}P_{\mathbf{L}^\perp}a_j.
    \end{equation}

    Observe that
    \begin{subequations}
        \label{250506a}
        \begin{align}
            q(n,0)=q(p_{k_n},0)
             & =
            \sum_{j=0}^{p_{k_n}} R_n\cdots R_{j+1}P_{\mathbf{L}^\perp}a_j                \\
             & =
            \sum_{j=p_{k_n-1}+1}^{p_{k_n}} R_n\cdots R_{j+1}P_{\mathbf{L}^\perp}a_j+\sum_{j=0}^{p_{k_n-1}} R_n\cdots R_{p_{k_n-1}+1}\cdots R_{j+1}P_{\mathbf{L}^\perp}a_j
            \\
             & =\sum_{j=p_{k_n-1}+1}^{p_{k_n}} R_n\cdots R_{j+1}P_{\mathbf{L}^\perp}a_j+
            R_n\cdots R_{p_{k_n-1}+1}P_{\mathbf{L}^\perp}\sum_{j=0}^{p_{k_n-1}} R_{p_{k_n-1}}\cdots R_{j+1}P_{\mathbf{L}^\perp}a_j,
        \end{align}
    \end{subequations}
    where we used \cref{f:commute} in the last equality. Continuing in this fashion, we arrive at
    \begin{subequations}
        \label{20251121b}
        \begin{align}
            q(n,0) & =
            q(p_{k_n},p_{k_n-1}+1)+R_n\cdots R_{p_{k_n-1}+1}P_{\mathbf{L}^\perp}q(p_{k_n-1},0)                                                                                         \\
                   & =q(p_{k_n},p_{k_n-1}+1)                                                                                                                                           \\
                   & \ \ \ \ +R_n\cdots R_{p_{k_n-1}+1}P_{\mathbf{L}^\perp}\left(q(p_{k_n-1},p_{k_n-2}+1)+R_{p_{k_n}-1}\cdots R_{p_{k_n-2}+1}P_{\mathbf{L}^\perp}q(p_{k_n-2},0)\right) \\
                   & =q(p_{k_n},p_{k_n-1}+1)+R_n\cdots R_{p_{k_n-1}+1}P_{\mathbf{L}^\perp}q(p_{k_n-1},p_{k_n-2}+1)                                                                     \\
                   & \ \ \ \ +R_n\cdots R_{p_{k_n-2}+1}P_{\mathbf{L}^\perp}q(p_{k_n-2},0)                                                                                              \\
                   & =q(p_{k_n},p_{k_n-1}+1)+R_n\cdots R_{p_{k_n-1}+1}P_{\mathbf{L}^\perp}q(p_{k_n-1},p_{k_n-2}+1)                                                                     \\
                   & \ \ \ \ +R_n\cdots R_{p_{k_n-2}+1}P_{\mathbf{L}^\perp}q(p_{k_n-2},p_{k_n-3}+1)                                                                                    \\
                   & \ \ \ \ +R_n\cdots R_{p_{k_n-3}+1}P_{\mathbf{L}^\perp}q(p_{k_n-3},0)                                                                                              \\
                   & \ \ \vdots                                                                                                                                                        \\
                   & =\sum_{i=1}^{k_n} R_n\cdots R_{p_{i}+1}P_{\mathbf{L}^\perp} q(p_{i},p_{i-1}+1) + R_n\dots R_{p_0+1}P_{\mathbf{L}^\perp}q(p_0,0).
        \end{align}
    \end{subequations}
    For all $i\in\{1,\ldots,k_n\}$,
    by the definition of $p_i$, we have that $R_{p_{i}}\cdots R_{p_{i-1}+2}$ does not contain any cycle. Hence, the collection of subspaces associated with $R_{p_{i}}\cdots R_{p_{i-1}+2}$ has less than $\ell$ elements. The induction hypothesis then implies that
    \begin{subequations}
        \label{20251121a}
        \begin{align}
            (\forall i\in\{1,\dots,k_n\})\quad \norm{q(p_{i},p_{i-1}+1)} & =\Big\|\sum_{j=p_{i-1}+1}^{p_i} R_{p_i}\cdots R_{j+1}a_j\Big\|                                                      \\
                                                                         & \leq\norm{R_{p_i}\cdots R_{p_{i-1}+2}a_{p_{i-1}+1}}+\Big\|{\sum_{j=p_{i-1}+2}^{p_i} R_{p_i}\cdots R_{j+1}a_j}\Big\| \\
                                                                         & \leq\tau +D.
        \end{align}
    \end{subequations}

    Since $R_{p_0} \cdots R_1$ corresponds to the remainder in the cycle decomposition, it also contains no cycle. Hence, by an argument similar to \eqref{20251121a}, we obtain
    \begin{equation}
        \norm{q(p_0,0)}\leq\tau+D.\label{20251207a}
    \end{equation}

    Recall that for all $i\in\{0,\dots,k_n\}$, the composition $R_n\cdots R_{p_i+1}$ factors into exactly $k_n - i$ cycles. We now pick up $\kappa_*>1$
    from \cref{c:contraction} for
    $\mathcal{L}$. We claim that
    \begin{equation}
        (\forall i\in\{0,\dots,k_n\})\quad\norm{R_n\dots R_{p_{i}+1}P_{\mathbf{L}^\perp}}\leq\left(1-\lambda(2-\lambda)\kappa_*^{-2(\ell-1)}\right)^{(k_n-i)/2}.\label{20251121c}
    \end{equation}
    Indeed, \cref{20251121c} is true for $i=k_n$ because
    $\|P_{\mathbf{L}^\perp}\|\leq 1$ (see \cref{f:reflection1}).

    For $i\in\{0,\ldots,k_n-1\}$, we have
    \begin{equation}
        R_n\cdots R_{p_i+1}=Q_1\cdots Q_{k_n-i},
    \end{equation}
    where each $Q_j$ is a cycle, for $j\in\{1,\dots, k_n-i\}$. Since $P_{\mathbf{L}^\perp}$ commutes with $P_L$ for all $L\in\mathcal{L}$, it follows from \cref{c:contraction} that
    \begin{subequations}
        \begin{align}
            \|R_n\cdots R_{p_i+1}P_{\mathbf{L}^\perp}\| & =\|Q_1\cdots Q_{k_n-i}P_{\mathbf{L}^\perp}\|                            \\
                                                        & =\|Q_1P_{\mathbf{L}^\perp}\cdots Q_{k_n-i}P_{\mathbf{L}^\perp}\|        \\
                                                        & \leq\|Q_1P_{\mathbf{L}^\perp}\|\cdots\|Q_{k_n-i}P_{\mathbf{L}^\perp}\|  \\
                                                        & \leq\left(1-\lambda(2-\lambda)\kappa_*^{-2(\ell-1)}\right)^{(k_n-i)/2}.
        \end{align}
    \end{subequations}
    Next, we estimate
    \begin{subequations}
        \begin{align}
            \norm{q(n,0)}
             & =\Big\|R_n\cdots R_{p_0+1}P_{\mathbf{L}^\perp}q(p_0,0)+\sum_{i={1}}^{k_n} R_n\cdots R_{p_{i}+1}P_{\mathbf{L}^\perp} q(p_{i},p_{i-1}+1)\Big\| \tag{by \cref{20251121b}} \\
             & \leq\norm{R_n\dots R_{p_0+1}P_{\mathbf{L}^\perp}q(p_0,0)}+\sum_{i={1}}^{k_n} \norm{R_n\cdots R_{p_{i}+1}P_{\mathbf{L}^\perp} q(p_{i},p_{i-1}+1)}
            \tag{triangle inequality}                                                                                                                                                 \\
             & \leq\norm{R_n\dots R_{p_0+1}P_{\mathbf{L}^\perp}}\norm{q(p_0,0)}+\sum_{i={1}}^{k_n} \norm{R_n\cdots R_{p_{i}+1}P_{\mathbf{L}^\perp}}\norm{q(p_{i},p_{i-1}+1)}
            \notag                                                                                                                                                                    \\
             & \leq\left(1-\lambda(2-\lambda)\kappa_*^{-2(\ell-1)}\right)^{k_n/2}(\tau+D)
            \notag                                                                                                                                                                    \\
             & \ \ \ \ \ +\sum_{i={1}}^{k_n}\left(1-\lambda(2-\lambda)\kappa_*^{-2(\ell-1)}\right)^{(k_n-i)/2}(\tau+D)
            \tag{by \cref{20251121c}, \cref{20251121a}, and \cref{20251207a}}                                                                                                         \\
             & =\sum_{i=0}^{k_n}\left(1-\lambda(2-\lambda)\kappa_*^{-2(\ell-1)}\right)^{(k_n-i)/2}(\tau+D)
            \notag                                                                                                                                                                    \\
             & \leq \frac{\tau+D}{1-\sqrt{1-\lambda(2-\lambda)\kappa_*^{-2(\ell-1)}}}.
            \tag{by \cref{e:251208a} and Geometric Series}
        \end{align}
    \end{subequations}
    This, \cref{l:meshulaminf}, and \cref{f:reflection1} yield the conclusion with
    \begin{equation}
        C_{\mathcal{A},\lambda} := \frac{\tau+D}{1-\sqrt{1-\lambda(2-\lambda)\kappa_*^{-2(\ell-1)}}}.\tag*{$\blacksquare$}
    \end{equation}
\end{proof}

Using a convexity argument, we now readily obtain
the following generalization of \cref{t:meshulaminf}
concerning the boundedness of the sequence generated
by relaxed projections:

\begin{corollary}[main result for varying relaxation parameters]
    \label{t:meshulaminfvar}
    Recall \cref{e:mcA} and \cref{e:mcL}.
    Suppose that $\mathcal{L}$ is innately regular and
    that $\lambda\in\left] 0,2\right[$.
    Then there exists a positive
    constant $C_{\mathcal{A},\lambda}<+\infty$ such that
    for
    any sequence $(A_n)_{n\in\mathbb{N}}$ drawn from $\mathcal{A}$,
    any sequence $(\lambda_n)_{n\in\mathbb{N}}$ in $[0,\lambda]$, and any
    starting point $x_0\in X$, the
    sequence generated by the iteration
    \begin{equation}
        x_{n+1} := R_{A_{n},\lambda_n}x_{n},
    \end{equation}
    satisfies
    \begin{equation}
        (\forall\nnn)\quad
        \|x_n\| \leq \|x_0\|+\lambda C_{\mathcal{A},\lambda}.
    \end{equation}
\end{corollary}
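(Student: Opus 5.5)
The plan is to reduce to \cref{t:meshulaminf} by a convexity/invariance argument, using the constant $C_{\mathcal{A},\lambda}$ from that theorem for the same $\lambda$. The key identity is that for $A\in\mathcal{A}$ and $\mu\in[0,\lambda]$ we have
\begin{equation*}
R_{A,\mu}=(1-\mu)\Id+\mu P_A=\Big(1-\frac{\mu}{\lambda}\Big)\Id+\frac{\mu}{\lambda}R_{A,\lambda}.
\end{equation*}
So each step with a smaller relaxation parameter is a convex combination of doing nothing and doing the fixed-$\lambda$ step. This identity is checked directly from $R_{A,\lambda}=(1-\lambda)\Id+\lambda P_A$; the case $\mu=0$ is simply $\Id$.

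Fix $x_0$. Let $O(x_0)$ be the set of all points $R_{B_m,\lambda}\cdots R_{B_0,\lambda}x_0$, where $m\in\mathbb{N}$ and $B_0,\dots,B_m\in\mathcal{A}$ are arbitrary, together with $x_0$ itself. By \cref{t:meshulaminf}, every point of $O(x_0)$ lies in the closed ball $\mathbb{B}$ of radius $\|x_0\|+\lambda C_{\mathcal{A},\lambda}$ centred at $0$. By construction, $R_{A,\lambda}(O(x_0))\subseteq O(x_0)$ for every $A\in\mathcal{A}$. Now set $K:=\overline{\conv}\,O(x_0)$. Since $\mathbb{B}$ is closed and convex, $K\subseteq\mathbb{B}$.

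Next I will show that $K$ is invariant under every $R_{A,\mu}$ with $A\in\mathcal{A}$ and $\mu\in[0,\lambda]$. The map $R_{A,\lambda}$ is affine, since it is a relaxation of the projector onto an affine subspace, namely $x\mapsto R_{L,\lambda}x+\lambda a$. It is also continuous, being nonexpansive. An affine map sends convex combinations to convex combinations, so $R_{A,\lambda}(\conv O(x_0))\subseteq\conv O(x_0)$. Continuity then passes this to the closure, giving $R_{A,\lambda}(K)\subseteq K$. Trivially $\Id(K)\subseteq K$. By the identity above and convexity of $K$, we get $R_{A,\mu}(K)\subseteq K$.

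Finally, induction on $n$ gives $x_n\in K$ for every $n$: indeed $x_0\in K$, and $x_{n+1}=R_{A_n,\lambda_n}x_n\in K$. Hence $\|x_n\|\le\|x_0\|+\lambda C_{\mathcal{A},\lambda}$. The only delicate point is the invariance of the closed convex hull, which rests on affinity and continuity of $R_{A,\lambda}$. Notably, the constant does not depend on $(\lambda_n)$, precisely because \cref{t:meshulaminf} is uniform over all sequences drawn from $\mathcal{A}$.
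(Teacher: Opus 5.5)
Your proof is correct and follows essentially the paper's route. Both use the identity $R_{A,\mu}=(1-\mu/\lambda)\Id+(\mu/\lambda)R_{A,\lambda}$, the affinity of $R_{A,\lambda}$, and the uniform ball bound from \cref{t:meshulaminf}; your invariant convex set $K$ replaces the paper's explicit expansion of $x_n$ as a convex combination of the points $R_{A_J,\lambda}x_0$. The closure and continuity step is not actually needed, since $\conv O(x_0)$ is already invariant under each $R_{A,\lambda}$ and lies in the ball.
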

\begin{proof}
    Note that
    \begin{subequations}
        \begin{align}
            x_{n+1} & = R_{A_{n},\lambda_n}x_{n}
            = (1-\lambda_n)x_n+\lambda_nP_{A_n}x_n,                                                                  \\
                    & = (1-\lambda_n/\lambda)x_n+(\lambda_n/\lambda-\lambda_n)x_n+\lambda_nP_{A_n}x_n                \\
                    & =(1-\lambda_n/\lambda)x_n+\frac{\lambda_n}{\lambda}\big((1-\lambda)x_n+\lambda P_{A_n}x_n\big) \\
                    & =\big((1-\mu_n)\Id+\mu_nR_{A_n,\lambda}\big)x_n,
        \end{align}
    \end{subequations}
    where $\mu_n := \lambda_n/\lambda\in[0,1]$.
    This implies $x_1=(1-\mu_0)x_0+\mu_0R_{A_0,\lambda} \in\conv\{x_0,R_{A_0,\lambda}x_0\}$
    and
    \begin{subequations}
        \begin{align}
            x_2
             & =
            \big((1-\mu_1)\Id + \mu_1R_{A_1,\lambda}\big)x_1                                    \\
             & =
            \big((1-\mu_1)\Id + \mu_1R_{A_1,\lambda}\big)
            \big((1-\mu_0)x_0+\mu_0 R_{A_0,\lambda}x_0\big)                                     \\
             & =
            (1-\mu_1)\big((1-\mu_0)x_0 + \mu_0R_{A_0,\lambda}x_0\big)
            +
            \mu_1R_{A_1,\lambda}\big((1-\mu_0)x_0 + \mu_0R_{A_0,\lambda}x_0\big)                \\
             & =
            (1-\mu_1)\big((1-\mu_0)x_0 + \mu_0R_{A_0,\lambda}x_0\big)
            +
            \mu_1\big((1-\mu_0)R_{A_1,\lambda}x_0 + \mu_0R_{A_1,\lambda}R_{A_0,\lambda}x_0\big) \\
             & =
            (1-\mu_1)(1-\mu_0)x_0
            + (1-\mu_1)\mu_0 R_{A_0,\lambda}x_0
            + \mu_1(1-\mu_0)R_{A_1,\lambda}x_0
            + \mu_1\mu_0R_{A_1,\lambda}R_{A_0,\lambda}x_0,
        \end{align}
    \end{subequations}
    which is in the convex hull of $\{x_0,R_{A_0,\lambda}x_0,R_{A_1,\lambda}x_0,R_{A_1,\lambda}R_{A_0,\lambda}x_0\}$.
    Induction on $n$ yields in general
    \begin{equation}
        x_n = \sum_{J\subseteq \{1,\ldots,n\}}
        \Big(\prod_{k\notin J} (1-\mu_k)\Big)
        \Big(\prod_{j\in J} \mu_j\Big)
        \Big(\prod_{j\in J} R_{A_j,\lambda}\Big)
        x_0
    \end{equation}
    where if $J=\{j_1,\ldots,j_k\}$ and
    $j_1<\cdots<j_k$, then
    $R_{A_J,\lambda} := \prod_{j\in J} R_{A_j,\lambda}:= R_{A_{j_k},\lambda}\cdots R_{A_{j_1},\lambda}$.
    Hence $x_n$ lies in the convex hull of
    $\{R_{A_J,\lambda}x_0\}_{J\subseteq\{1,\ldots,n\}}$.
    Since $\mathcal{L}$ is innately regular, by \cref{t:meshulaminf}, $\{R_{A_J,\lambda}x\}_{J\subseteq\{1,\ldots,n\}}$
    lies in the (convex!) ball of radius $\|x_0\|+\lambda C_{\mathcal{A},\lambda}$ centered at $0$ for all $n\in\mathbb{N}$. Consequently, $(x_n)_{n\in\mathbb{N}}$ also lies in that ball and we are done.
\end{proof}

\section{Applications and limiting examples}
\label{s:ex}

\subsection*{Connection to randomized block Kaczmarz methods}

Consider the problem of solving a linear system
\begin{equation}
    \label{e:RKP}
    Mx = b,
\end{equation}
where $M\in\RR^{p\times q}$ and $b\in\RR^p$.
\emph{Randomized block Kaczmarz algorithms}
tackle \cref{e:RKP} by producing a sequence whose
terms are updated by projecting onto the randomly chosen affine subspaces of the form
$M_Ix=b_I$, where $I$ is a block of indices drawn from
$\{1,\ldots,p\}$, and $M_I$ (resp.\ $b_I$) is the matrix
(resp.\ vector) created from $M$ (resp.\ $b$) by retaining
only entries corresponding to the row indices~$I$.
(The original randomized Kaczmarz algorithm arises
if each block of indices is a singleton, i.e., the
affine subspaces are hyperplanes.)
Randomized block Kaczmarz methods are
now well understood even in the inconsistent case (when \cref{e:RKP} has no solution). Typical convergence results assert that
\begin{equation}
    \text{
        the sequence $(x_n)_\nnn$ generated by
        randomized block Kaczmarz is \emph{bounded in expectation},
    }
\end{equation}
along with estimates to least-squares solutions; see, e.g.,
the paper by Needell and Tropp \cite{NT},
and references therein.
We note that \cref{f:Meshulam} strengthens this not only
to almost sure boundedness but even to
\begin{equation}
    \text{
        the sequence $(x_n)_\nnn$ generated by
        randomized block Kaczmarz is \emph{always bounded},
    }
\end{equation}
which is an observation we have not seen explicitly stated
in the literature on randomized block Kaczmarz algorithms.

\subsection*{A cyclic result}

In the setting of \cref{t:meshulaminf}, if we do not randomly
pick relaxed projectors but rather iterate cyclically, then
the resulting sequence converges \emph{linearly} as we now show:

\begin{theorem}[innate regularity and linear convergence of
        cyclic relaxed projections]\label{t:cyclic}
    Recall \cref{e:mcA} and \cref{e:mcL}.
    Suppose that $\mathcal{L}$ is innately regular and that
    $\lambda\in\left]0,2\right[$.
    Let $Q$ be a finite composition of relaxed projectors
    drawn from $\mathcal{R}_{\mathcal{A},\lambda}$.
    Then $\Fix Q\neq\varnothing$ and
    for every $x_0\in X$, the sequence
    $(Q^nx_0)_\nnn$ converges linearly to
    $P_{\Fix Q}(x_0)$.
\end{theorem}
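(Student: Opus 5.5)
Write $Q = R_{A_m,\lambda}\cdots R_{A_0,\lambda}$ with $A_j = a_j + L_j$, $R_j := R_{L_j,\lambda}$ and $a_j\in L_j^\perp$. By \cref{l:meshulaminf}, $Q$ is affine:
\begin{equation*}
Qx = Tx + b,\qquad T := R_m\cdots R_0,\qquad b := \lambda\sum_{j=0}^m R_m\cdots R_{j+1}a_j .
\end{equation*}
Let $\mathcal{L}'\subseteq\mathcal{L}$ be the subspaces actually occurring in $Q$, and set $M := \bigcap_{L\in\mathcal{L}'}L$. Then $\mathcal{L}'$ is innately regular, so the machinery of \cref{s:relaxedproj} applies to $\mathcal{L}'$. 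The plan is to split $X = M\oplus M^\perp$ and show that on $M$ the map $T$ is the identity, while on $M^\perp$ it is a strict contraction.

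\textbf{Block structure of $T$.} By \cref{f:commute}, $T$ commutes with $P_M$ and $P_{M^\perp}$. Moreover $TP_M = P_M$, since each $R_j$ fixes $M\subseteq L_j$. Also $b\in M^\perp$, because each $a_j\in L_j^\perp\subseteq M^\perp$ and $M^\perp$ is invariant under every $R_j$.

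\textbf{Contraction on $M^\perp$.} Apply the ``consequently'' part of \cref{p:sine3} to the finite sequence $R_0,\dots,R_m$, for which $\mathbf{L}_m = M$. This gives
\begin{equation*}
\norm{TP_{M^\perp}}\leq \gamma := \sqrt{1-\lambda(2-\lambda)\kappa_*^{-2(N_m-1)}}<1 .
\end{equation*}
This is the key step. It is essentially already proved, and the main care needed is to confirm that \cref{p:sine3} applies to an arbitrary finite word rather than only to a cycle. Since \cref{p:sine3} is stated for every $q$ and every choice of the $R_i$, it does.

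\textbf{Fixed points.} Decompose $x = u + v$ with $u\in M$ and $v\in M^\perp$. Then
\begin{equation*}
Qx = u + \bigl(TP_{M^\perp}v + b\bigr).
\end{equation*}
The map $v\mapsto TP_{M^\perp}v + b$ is a $\gamma$-contraction of the closed subspace $M^\perp$ into itself. By Banach's fixed point theorem it has a unique fixed point $v^*\in M^\perp$, and so
\begin{equation*}
\Fix Q = M + v^* \neq\varnothing .
\end{equation*}
This is a closed affine subspace parallel to $M$.

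\textbf{Linear convergence.} For $x_0 = u_0 + v_0$ we have
\begin{equation*}
Q^n x_0 = u_0 + v_n,\qquad \norm{v_n - v^*}\leq \gamma^n\norm{v_0 - v^*}.
\end{equation*}
Hence $Q^nx_0\to u_0 + v^*$ linearly. It remains to identify this limit as $P_{\Fix Q}x_0$. Since $\Fix Q = v^* + M$ with $v^*\in M^\perp$,
\begin{equation*}
P_{\Fix Q}x_0 = v^* + P_M(x_0 - v^*) = v^* + u_0 .
\end{equation*}
Therefore $\norm{Q^nx_0 - P_{\Fix Q}x_0}\leq\gamma^n\, d_{\Fix Q}(x_0)$, which is linear convergence with rate $\gamma$.
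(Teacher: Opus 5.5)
Your proof is correct, and it takes a genuinely different route from the paper's. The paper relies on general fixed point theory and proceeds in five steps:
\begin{enumerate}
\item it invokes \cref{t:meshulaminf} to get a bounded orbit of $Q$;
\item a classical fixed point theorem for nonexpansive maps with a bounded orbit then gives $\Fix Q\neq\varnothing$;
\item a structural result on affine nonexpansive maps lets it write $Q^nx_0=T^n(x_0-y_0)+y_0$ with $y_0\in\Fix Q$;
\item external results on products of projections, which is where innate regularity enters, give linear convergence of $T^n$ to $P_{\Fix T}$;
\item a transfer theorem passes this linear convergence from $T$ to $Q$.
\end{enumerate}

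You bypass all of this. You split $X=M\oplus M^\perp$ with $M=\bigcap_j L_j$, note that $TP_M=P_M$ and $b\in M^\perp$, and take $\|TP_{M^\perp}\|\le\gamma<1$ from the ``consequently'' part of \cref{p:sine3}. As you correctly observe, that part applies to any finite word, since $\mathbf{L}_m=M$. This reduces everything to Banach's fixed point theorem on $M^\perp$.

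Your version is self-contained relative to \cref{s:relaxedproj} and does not need the boundedness theorem at all. It also yields more explicit information: the structure $\Fix Q=v^*+M$, the rate $\gamma=\sqrt{1-\lambda(2-\lambda)\kappa_*^{-2(N_m-1)}}$ (independent of $x_0$), and the error bound $\|Q^nx_0-P_{\Fix Q}x_0\|\le\gamma^n d_{\Fix Q}(x_0)$. The paper's argument is shorter on the page and places the result within the general theory of affine nonexpansive operators, using \cref{t:meshulaminf} as the source of fixed points. The cost is reliance on several external results and no explicit rate or description of $\Fix Q$.
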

\begin{proof}
    By \cref{t:meshulaminf}, the sequence $(Q^nx_0)_\nnn$ is bounded.
    By \cite[Theorem~1]{BrowderP}, $\Fix Q\neq\varnothing$.
    Let $y_0\in \Fix Q$, and
    let $T$ be the associated composition of $Q$,
    where the affine subspaces are replaced by the corresponding
    parallel linear spaces.
    By \cite[Corollary~3.3.(iii)]{BEdwards},
    $(\forall\nnn)$ $Q^nx_0 = T^n(x_0-y_0)+y_0$.
    The innate regularity of $\mathcal{L}$ coupled with
    \cite[Theorem~5.7]{BB1996} and \cite[Proposition~5.9(ii)]{Bauschke2017}
    yield pointwise linear convergence
    of the iterates of $T$ to $P_{\Fix T}$.
    Finally, \cite[Theorem~3.3]{Lukens} yields
    pointwise linear convergence of the iterates of $Q$ to
    $P_{\Fix Q}$.
\end{proof}

\subsection*{An inconsistent linear system in Euclidean space}
We illustrate \cref{t:cyclic} by plotting the behavior of relaxed projections onto a randomly generated family of affine hyperplanes with empty intersection. We generate $M\in\mathbb{R}^{15\times 10}$ with i.i.d.\ standard normal entries and normalize each row, and $b\in\mathbb{R}^{15}$ with i.i.d.\ standard normal entries. The affine hyperplanes are $A_i := \{x\in\mathbb{R}^{10} \mid \langle a_i,x\rangle = b_i\}$, where $a_i$ is the $i$th row of $M$. Starting from $x_0=0$, we construct the sequence of iterates $x_{n+1} = R_{A_{i_n},\lambda} x_n$, where $i_n$ is chosen uniformly for the randomized method or cyclically for the cyclic method. In the cyclic plot, we also highlight the subsequence $(Q^n x_0)_\nnn$, where $Q = R_{A_{15},\lambda}\cdots R_{A_1,\lambda}$. We use relaxation parameters $\lambda \in \{0.5,1,1.5\}$ and run $3000$ iterations (i.e., $200$ applications of $Q$). For visualization, only the first two coordinates of the iterates are plotted
in \cref{fig:traj}. The random orbit appears to be more dense and exploratory
than its cyclic counterpart.

\begin{figure}[H]
    \centering
    \begin{subfigure}{0.45\linewidth}
        \includegraphics[width=\linewidth]{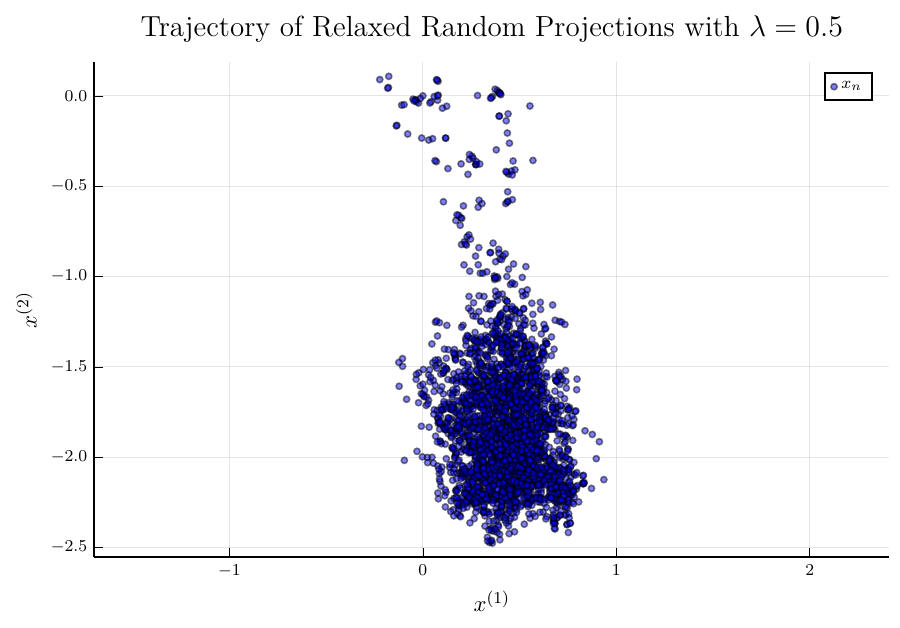}
    \end{subfigure}
    \hfill
    \begin{subfigure}{0.45\linewidth}
        \includegraphics[width=\linewidth]{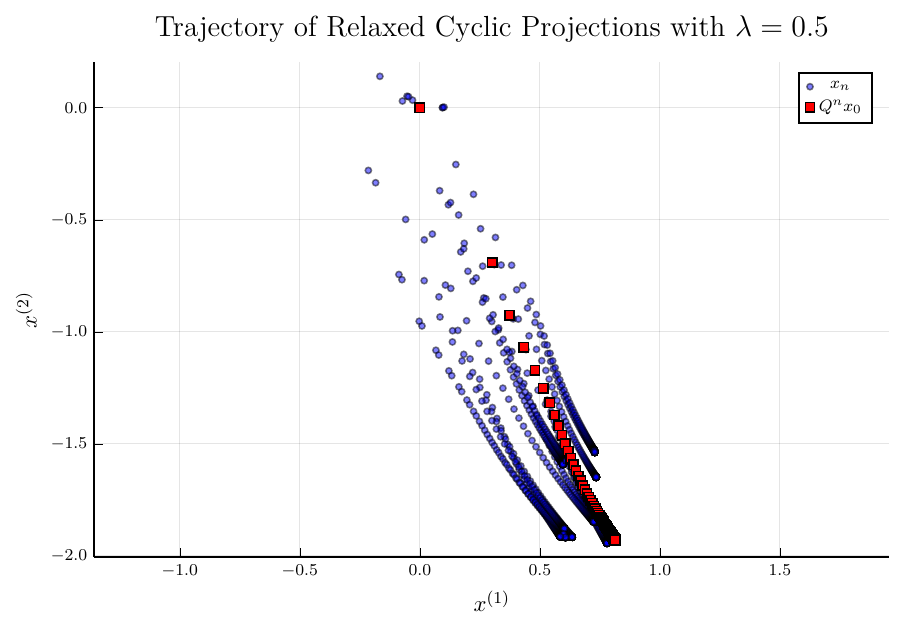}
    \end{subfigure}

    \vspace{0.05em} 

    \begin{subfigure}{0.45\linewidth}
        \includegraphics[width=\linewidth]{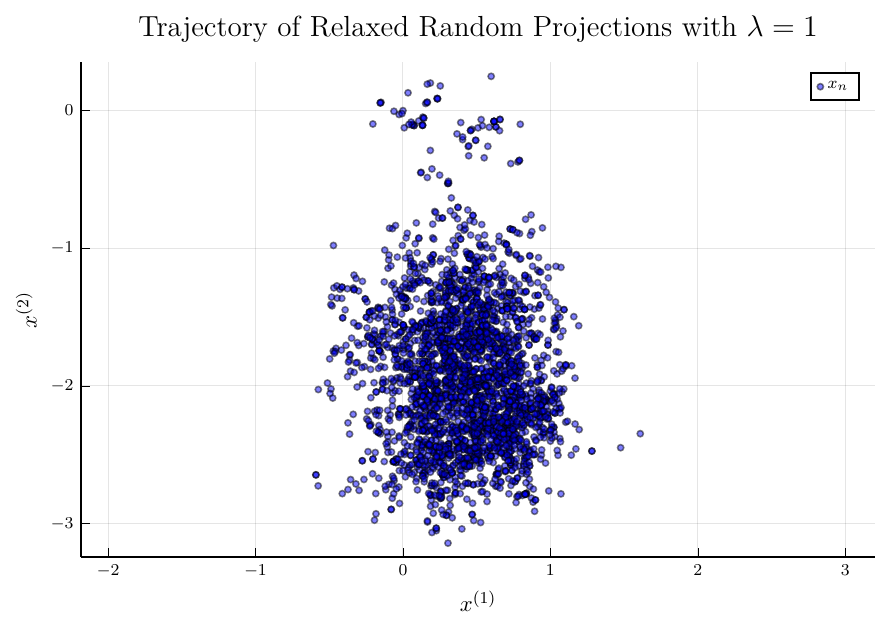}
    \end{subfigure}
    \hfill
    \begin{subfigure}{0.45\linewidth}
        \includegraphics[width=\linewidth]{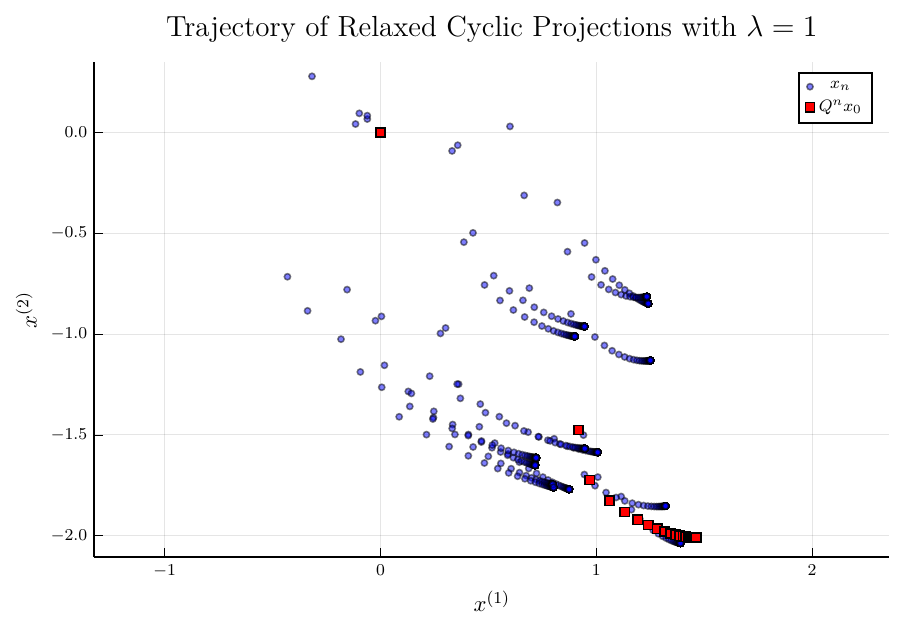}
    \end{subfigure}

    \vspace{0.05em} 

    \begin{subfigure}{0.45\linewidth}
        \includegraphics[width=\linewidth]{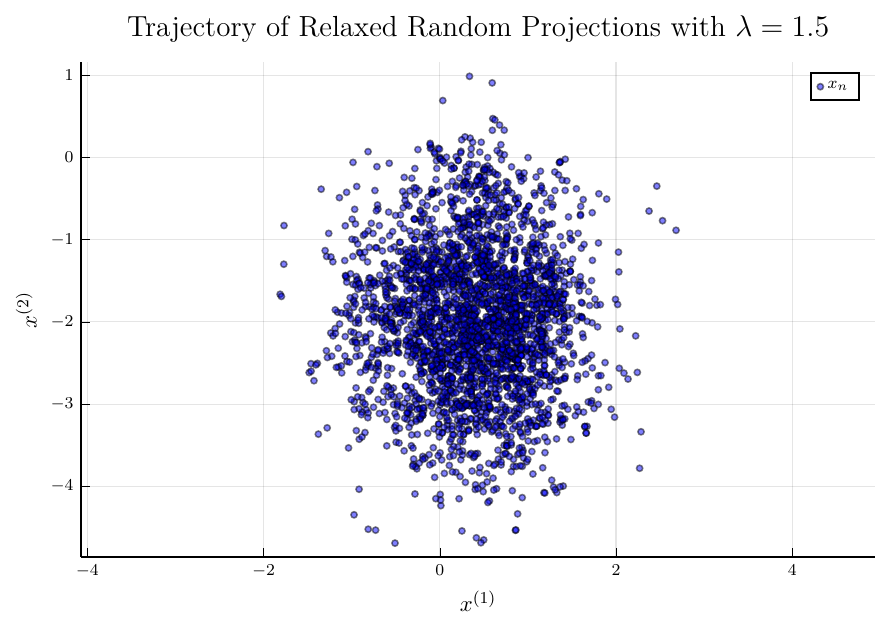}
    \end{subfigure}
    \hfill
    \begin{subfigure}{0.45\linewidth}
        \includegraphics[width=\linewidth]{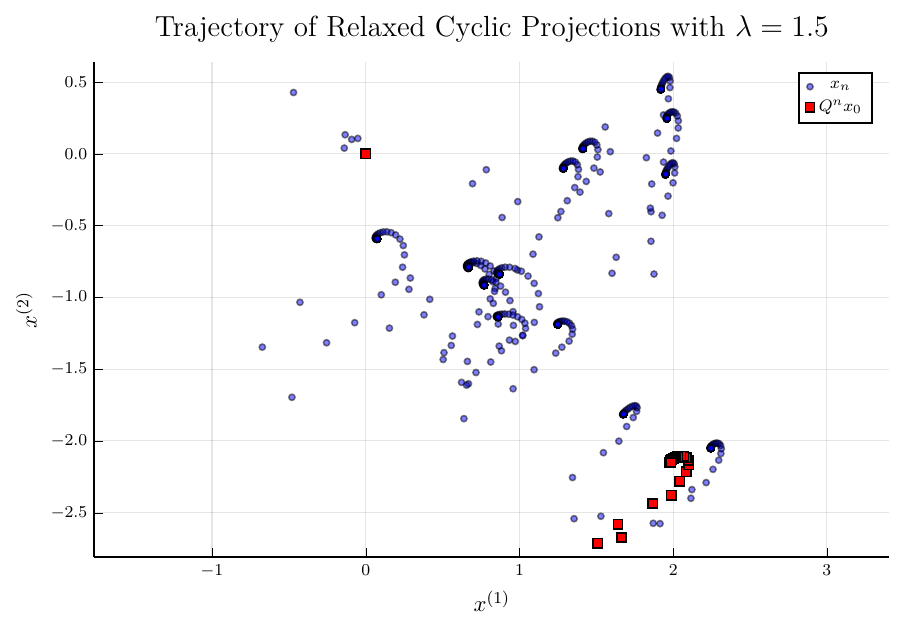}
    \end{subfigure}

    \caption{The first two coordinates of the relaxed random and cyclic projection sequences.}
    \label{fig:traj}
\end{figure}

\subsection*{An inconsistent linear inverse problem in $L_2[0,1]$}

Now suppose that $X=L_2[0,1]$, which is an infinite-dimensional separable real Hilbert space, and that we are given
four hyperplanes
$A_i = \menge{x \in X}{\scal{a_i}{x}=b_i}$,
with $(b_1,b_2,b_3,b_4)=(1,1,1,1)$ and
\begin{equation}
    a_1(t):= \sqrt{2}\sin(2\pi t),
    \;\;
    a_2(t):= \sqrt{2}\cos(2\pi t),
    \;\;
    a_3(t):= \sqrt{2}\cos(6\pi t),
    \;\;
    a_4 := a_1+a_2+a_3.
\end{equation}
(The functions $a_1,a_2,a_3$ form an orthonormal system and
so $\|a_4\|=\sqrt{3}$.)
Because $b_1+b_2+b_3\neq b_4$, we deduce that
$A_1\cap A_2\cap A_3\cap A_4 =\varnothing$, i.e.,
the corresponding linear inverse problem is inconsistent\footnote{For more on (possibly inconsistent) linear inverse problems, see \cite{BDMP}.}.
Starting from $x_0=0\in X$ and using the relaxation parameter
$\lambda=1.5$, we visualize
some of the random and the cyclic iterates in \cref{fig:3} and \cref{fig:2}, respectively. Similarly to the behaviour
in the finite-dimensional experiment, the cyclic iterates
converge to a cycle while the random iterates once again appear to be more exploratory.

\begin{figure}[H]
    \includegraphics[width=\linewidth]{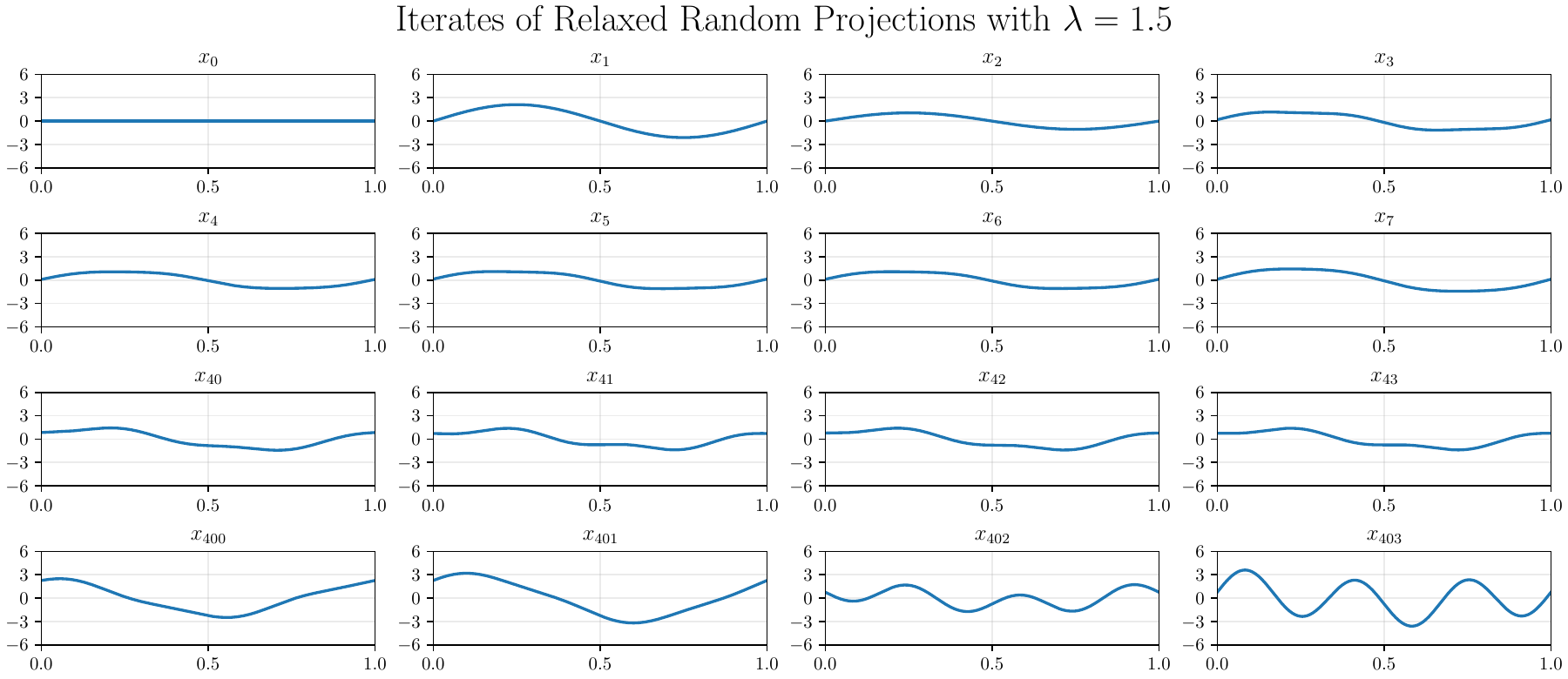}
    \caption{Selected iterates from the relaxed random projection sequence}
    \label{fig:3}
\end{figure}

\begin{figure}[H]
    \includegraphics[width=\linewidth]{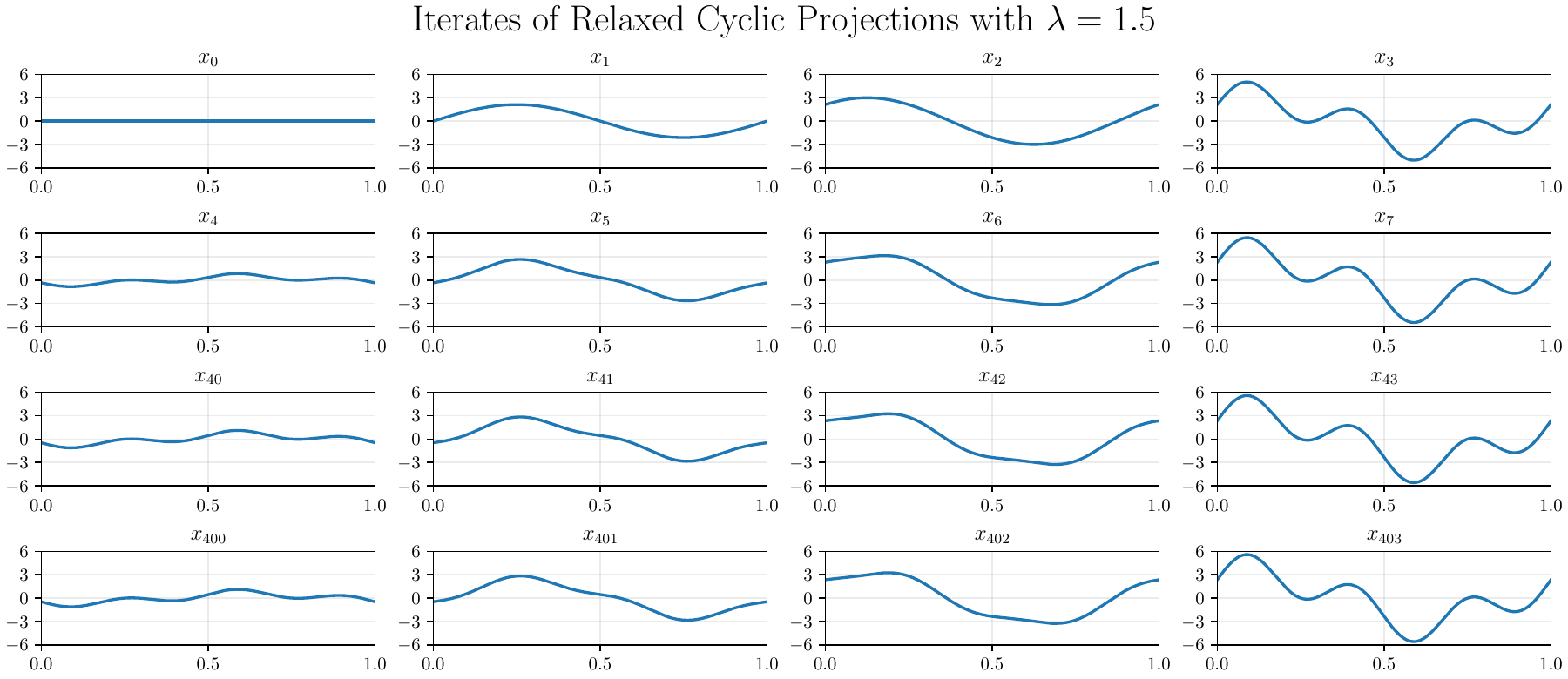}
    \caption{Selected iterates from the relaxed cyclic projection sequence}
    \label{fig:2}
\end{figure}

\subsection*{Concluding comments}

We conclude this paper by pointing out a variant
of \cref{f:Meshulam} as well as a limiting example.

\begin{remark}[polyhedral sets]
    \label{r:ps}
    Consider \cref{f:Meshulam}.
    \begin{enumerate}
        \item
              \label{r:ps1}
              One can show (see \cite[Theorem~3.2]{BTBanff}]) that
              \cref{f:Meshulam} remains true if
              $\mathcal{A}$ is replaced by a nonempty finite collection
              of \emph{polyhedral subsets} of $X$.
        \item
              The result mentioned in \cref{r:ps1} is a variant of
              \cref{t:meshulaminf}; however, neither implies the other.
    \end{enumerate}
\end{remark}

The astute reader will wonder whether the innate regularity
assumption is needed. The following limiting example shows
that \emph{some} additional assumption is required to
guarantee boundedness of the sequence generated in \cref{t:meshulaminf}:

\begin{example}[\cref{t:meshulaminf} may fail without innate regularity]
    {\rm \cite[Example~4.2]{BTBanff}}\label{e:noregular}
    Following \cite[Example~4.3]{02}, there exists an instance of the Hilbert space $X$ that contains two closed affine subspaces
    $A_1$ and $A_2$ such that their corresponding linear subspaces
    $L_1,L_2$ form a collection $\mathcal{L}=\{L_1,L_2\}$
    that is not innately regular. The ``gap''
    $\inf\|A_1-A_2\|$ between $A_1,A_2$ is equal to $1$ but
    the infimum is not attained. Now let  $x_0\in X$ and generate the sequence of \emph{alternating projections} via
    \begin{equation}
        x_{2n+1} := P_{A_1}x_{2n}
        \;\;\text{and}\;\;
        x_{2n+2} := P_{A_2}x_{2n+1}.
    \end{equation}
    By \cite[Corollary~4.6]{02}, we have $\|x_n\|\to\infty$.
\end{example}

Finally, we conclude with a comment on the sequence of
relaxation parameters:

\begin{remark}[relaxation parameters]
    In \cref{t:meshulaminfvar}, we assumed that
    the sequence $(\lambda_n)_\nnn$ of relaxation parameters
    satisfies $\sup_{\nnn}\lambda_n < 2$.
    We point out that \cite[Section~5]{BTBanff} identifies several scenarios in which the sequence $(\lambda_n)_\nnn$ in \cref{t:meshulaminfvar} satisfies $\varlimsup_{n}\lambda_n = 2$, and the corresponding iterates $(x_n)_{\nnn}$ exhibit different behaviors: they may be constant, convergent, bounded but not convergent, or unbounded.
\end{remark}

\section*{Acknowledgments}
\small
The authors thank Dr.~Daniel Reem for his helpful
comments on earlier versions of this manuscript,
Dr.~Francisco Arag\'on Artacho
for referring us to \cite[Section~5]{AACT} which inspired
us to consider the example in $L_2[0,1]$ in \cref{s:ex},
Dr.~Simeon Reich for referring us to \cite{RZBams}
and \cite{RZJAA} which enhanced \cref{r:2},
and two anonymous referees for their helpful
comments.
The plots were created with the help of
\texttt{ChatGPT}.
The research of HHB was partially supported by a Discovery Grant
of the Natural Sciences and Engineering Research Council of
Canada.

\end{document}